\renewcommand{\left}{\mleft}
\renewcommand{\right}{\mright}
\newcommand{\defeq}{\mathrel{\vcentcolon =}}
\newcommand{\eps}{\ensuremath{\epsilon}}
\newcommand{\N}{\ensuremath{\mathds N}}	%natural numbers
\newcommand{\Cpx}{\ensuremath{\mathds C}}
\newcommand{\C}{\ensuremath{\mathcal C}}
\newcommand{\B}{\ensuremath{\mathcal B}} %everything bounded linear
\newcommand{\Cs}{\ensuremath{C^{\ast}}}
\newcommand{\Csalgebra}{\Cs-algebra}
\renewcommand{\star}{\ensuremath{^\ast}}
\newcommand{\staralgebra}{\star-algebra}
\renewcommand{\phi}{\ensuremath{\varphi}} %sch** auf kleinphi
\renewcommand{\ref}[1]{\textup{\ref{#1}}}
\newcommand{\etale}{\text{étale}}
\newcommand{\Csr}{\ensuremath{\Cs_{\text{r}}}} %reduced C*-alg
\newcommand{\sub}[1]{\ensuremath{\textup{Sub}(#1)}}
\newcommand{\Schreier}{\ensuremath{S_\Gamma^Q}}
\newcommand{\rlisom}{\ensuremath{\cong_{r,l}}}
\newcommand{\lockerZ}{\ensuremath{\mathbb{C}Z}}
\newcommand{\centralstar}{\infty}
\newcommand{\G}{\ensuremath{\mathcal{G}}}
\definecolor{thmback}{rgb}{0.93,0.94,0.95}
\definecolor{lightback}{rgb}{0.93,0.94,0.95}
\declaretheoremstyle[
	spaceabove=7pt, spacebelow=7pt,
	headfont=\normalfont\itshape,
	notefont=\mdseries, notebraces={(}{)},
	bodyfont=\normalfont,
	postheadspace=1em
% 	thmbox=S
	% qed=\qedsymbol
]{rem}
\declaretheoremstyle[
	spaceabove=7pt, spacebelow=7pt,
	headfont=\normalfont\bfseries,
	notefont=\mdseries, notebraces={}{},
	bodyfont=\normalfont,
	postheadspace=1em,
	shaded={bgcolor=lightback,padding=2mm,textwidth=0.98\textwidth},
% 	thmbox=S
	% qed=\qedsymbol
]{prop}
\declaretheoremstyle[
	spaceabove=20pt, spacebelow=7pt,
	headfont=\normalfont\bfseries,
	notefont=\mdseries, notebraces={(}{)},
	bodyfont=\normalfont,
	postheadspace=1em,
	shaded={bgcolor=thmback,padding=2mm,textwidth=0.98\textwidth},
% 	thmbox=S
	% qed=\qedsymbol
]{forsty}
\declaretheoremstyle[
	spaceabove=7pt, spacebelow=7pt,
	headfont=\normalfont\bfseries,
	notefont=\normalfont\bfseries, 
	notebraces={}{},
	bodyfont=\normalfont,
	postheadspace=0.5em,
	headpunct={:}
% 	thmbox=S
	% qed=\qedsymbol
]{def}
\newlength{\nameadjust}
\declaretheoremstyle[
	spaceabove=7pt, spacebelow=7pt,
	headfont=\normalfont\bfseries,
	notefont=\normalfont\bfseries,  
	notebraces={}{},
	bodyfont=\normalfont\itshape,
	postheadspace=1em,
	shaded={bgcolor=thmback,padding=2mm,textwidth=0.98\textwidth},
	headpunct={:},
% 	headformat={\hspace{-\nameadjust}\makebox[0pt][r]{\NAME\ \NUMBER\ }\hspace{\nameadjust}},
% 	preheadhook={\centering\begin{minipage}{0.8\textwidth}},
% 	postfoothook={\end{minipage}},
]{thmsty}
\declaretheoremstyle[
	spaceabove=7pt, spacebelow=7pt,
	headfont=\normalfont\bfseries,
	notefont=\mdseries, notebraces={(}{)},
	bodyfont=\normalfont\itshape,
	postheadspace=1em,
	shaded={bgcolor=thmback,padding=2mm,textwidth=0.98\textwidth},
% 	headformat={\makebox[0pt][r]{\NAME\ \NUMBER\ }\hskip-\spacelength{\NOTE}},
]{mainthmsty}
	\declaretheorem[style=thmsty, numberwithin=section,name=Theorem]{theorem}
	\declaretheorem[style=prop, numberlike=theorem,name=Proposition]{proposition}
	\declaretheorem[style=thmsty, numberlike=theorem,name=Corollary]{corollary}
	\declaretheorem[style=rem, numbered=no, name=Remark]{remark}
\title{A Groupoid Picture of Elek Algebras}
\date{\empty}
\author{Clemens Borys
\thanks{Supported by a grant from the Danish Council 
  for Independent Research, Natural Sciences.
}}
\begin{document}

\pagenumbering{arabic}
\setcounter{page}{1}

\maketitle

\begin{abstract}
  We describe a construction by G\'abor Elek,
  associating \Csalgebra{}s with uniformly recurrent subgroups,
  in the language of groupoid \Csalgebra{}s.
  This allows us to  simplify several proofs in the original paper
  and add a new characterisation of nuclearity.
  We furthermore relate our groupoids
  to the dynamics of the group acting on its uniformly recurrent subgroup.
\end{abstract}

\section{Introduction}
\label{sec:introduction}
The constructions of (reduced) \Csalgebra{}s
associated with groups
or, more generally,
crossed products associated with 
actions of groups on topological spaces
are well-known in the field of operator algebras
and continue to provide a handy tool 
for constructing and understanding
examples of \Csalgebra{}s,
as well as establishing ties to other branches of mathematics
such as dynamics
or geometric group theory.
Defined by Glasner and Weiss \cite{GlasnerWeissURS},
\emph{Uniformly recurrent subgroups},
or URS for short,
have recently drawn a lot of attention
in the world of \Csalgebra{}s,
when Kennedy \cite{KennedyIntrinsic}
managed to characterise \Cs-simplicity
of a discrete group $\Gamma$
as the absence of non-trivial amenable
uniformly recurrent subgroups.
Another relation between uniformly recurrent subgroups
and \Csalgebra{}s
is given by a construction of G.~Elek \cite{Elek_URS},
who constructs a \Csalgebra{} that is closely tied
to the dynamics of a finitely generated discrete group
acting on one of its uniformly recurrent subgroups $Z$,
but takes more of the combinatorial nature
of $Z$ into account
than the crossed product does.
This construction is thereby very well-suited
for finding \Csalgebra{}s with desired properties
by rephrasing such properties on the combinatorial level
of URSs described by their associated Schreier graphs,
and Elek obtains, for example,
a \Csalgebra{} with a uniformly amenable and a nonuniformly amenable trace.

In this paper we recast Elek's construction
from the viewpoint of groupoid \Csalgebra{}s,
simplifying and extending the ties
between properties of the URS
and its associated algebra.
Using the new angle, we improve on Elek's characterisation
of when his algebras are nuclear.

In addition to this introduction there are five sections.
After recalling the necessary terminology in Section \ref{sec:preliminaries},
we construct an \etale{} groupoid 
for a given URS $Z$
whose reduced \Csalgebra{}
is canonically isomorphic to Elek's \Csalgebra{} $\Csr(Z)$
in Section \ref{sec:identification}.
In Section \ref{sec:trafo}
we relate this groupoid to the dynamics
of the action of $\Gamma$ on $Z$ by conjugation.
Finally,
in Section \ref{sec:simNuc},
we use the new framework to give simpler proofs for some of Elek's results
on simplicity and nuclearity of the associated \Csalgebra{}s
and add the converse implication
to his characterisation of nuclearity of $\Csr(Z)$.

The author is grateful to his supervisors M.\ Musat and M.\ Rørdam
for their continued help and support,
as well as to G.\ Elek
for some exciting conversations.

\section{Preliminaries}
\label{sec:preliminaries}
We recall the definition of Elek's \Csalgebra{}s associated with uniformly recurrent subgroups.
Let $\Gamma$ be a finitely generated discrete group. 
Let $\sub{\Gamma}$ be the space of its subgroups, equipped with the 
topology of pointwise convergence
of the characteristic functions associated to the subsets
and left $\Gamma$-action by conjugation 
$\gamma.H = \gamma H\gamma^{-1}$ for $H\in \text{Sub}(\Gamma)$ and $\gamma \in \Gamma$.
For a discrete group $\Gamma$
this topology is also known as the Fell topology on $\sub{\Gamma}$.
Recall that a \emph{uniformly recurrent subgroup} or \emph{URS} $Z$ of $\Gamma$
is a closed, $\Gamma$-invariant subspace of $\sub{\Gamma}$
on which the action is \emph{minimal}, that is, on which every orbit is dense.
The URS is called \emph{generic}, if the stabiliser of any subgroup $H\in Z$
is as small as possible, namely $H$ itself.

Fixing a finite, symmetric system of generators $Q$ of $\Gamma$,
to each $H\in \sub\Gamma$ we may assign a rooted, labeled graph $\Schreier(H)$
called its \emph{Schreier graph},
which has vertex set $\Gamma/H$, root $H$,
and for every $\gamma H \in \Gamma/H$ and $q\in Q$ an edge 
from $\gamma H$ to $q \gamma H$ labeled by $q$.
We denote the shortest-path metric on a graph $S$ by $d$, 
or $d_S$ if there is ambiguity,
and likewise the balls of radius $R$ around a vertex $x \in S$ by $B_R(x)$ or $B_R(S,x)$.
On the space $\Schreier$ of Schreier graphs associated with URS's of $\Gamma$,
we introduce a metric by 
\begin{equation}
  \label{eq:SchreierMetric}
  d_{\Schreier}(S_1, S_2) \defeq 2^{-r}
\end{equation}
for $S_1 = \Schreier(H_1)$ and $S_2 = \Schreier(H_2)$ two graphs in $\Schreier$ 
and $r$ the largest integer
such that $B_r(S_1, H_1)$ and $B_r(S_2, H_2)$ are root-label isomorphic.
This space carries a left $\Gamma$-action, where 
$\gamma.\Schreier(H) = \Schreier(\gamma H \gamma^{-1})$,
that is, $\gamma$ acts by changing the root.
Elek identifies the graphs $\Schreier(H)$ 
for which the orbit closure of $H$ forms a URS 
as those where any root-label isomorphism class of balls is repeated
with at most bounded distance from any point in the graph 
(see \cite[Proposition 2.1]{Elek_URS}),
and the graphs for which it forms a generic URS 
as those where vertices with large isomorphic balls are sufficiently far apart
(see \cite[Proposition 2.3]{Elek_URS}).

To associate a \Csalgebra{} with a given URS $Z$, 
Elek considers its \emph{local kernel algebra} $\lockerZ$ 
formed by the \emph{local} kernels 
$K\colon \Gamma/H \times \Gamma/H \rightarrow \Cpx$ of finite width 
on $\Schreier(H)$ for some subgroup $H\in Z$.
A kernel $K$ on $\Schreier(H)$ is of width $R$, if $K(x,y) = 0$
for any two vertices $x,y \in \Schreier(H)$ with $d(x,y) > R$.
It is furthermore \emph{local} with width $R$, 
if $K(x, \gamma.x) = K(y, \gamma.y)$ for any $x$ and $y$ with root-label isomorphic $R$-balls
and $\gamma \in \Gamma$ of length at most $R$.
Equipped with pointwise addition, 
convolution $KL(x,y)=\sum_z K(x,z)L(z,y)$, 
and involution $K^\ast(x,y) = \overline{K(y,x)}$
for local kernels $K$ and $L$ and $x,y,$ and $z$ in $\Gamma / H$,
the local kernel algebra forms a \staralgebra{}.
Up to isomorphism, this algebra does not depend on the choice of root $H\in Z$.
A ``regular'' representation of $\lockerZ$ on $\ell^2(\Gamma/H)$ 
is given by $(Kf)(x) = \sum K(x,y) f(y)$ 
for $f\in \ell^2(\Gamma/H)$.
The reduced $\Cs$-algebra $\Csr(Z)$ of $Z$ 
is the completion of $\lockerZ$ in the norm induced by this representation.

Several \Csalgebra{}ic properties of $\Csr(Z)$ can be read off of the URS $Z$
and its Schreier graph.
Genericity of $Z$ implies simplicity of $\Csr(Z)$,
as discussed in Section \ref{sec:simNuc}.
Furthermore, a \emph{local} version of Yu's property $A$ for the Schreier graph
$\Schreier(H)$,
for any subgroup $H$ in the URS $Z$,
is equivalent to nuclearity of $\Csr(Z)$.
Recall that a Schreier graph $\Schreier(H)$ for $H\in Z\subseteq \Gamma$  
has Elek's \emph{local property $A$},
if there is a sequence of \emph{local} functions
$\rho^n\colon \Gamma/H \rightarrow l^2(\Gamma/H), x \mapsto \rho^n_x$,
such that $\|\rho^n_x\|_2=1$, 
while $d(x,y)\leq n$ implies $\|\rho^n_x - \rho^n_y\|\leq 1/n$.
As with kernels, \emph{locality} of $\rho^n$
means that there is $R_n>0$ such that $\rho^n_x$ is supported in the 
$R_n$-ball $B_{R_n}(x)$ centred at $x$ 
and whenever $\theta$ is a root-label isomorphism $B_{R_n}(x) \rightarrow B_{R_n}(y)$, 
we have $\rho^n_y \circ \theta = \rho^n_x$.

\section{URS Algebras as Groupoid Algebras}
\label{sec:identification}
For a given uniformly recurrent subgroup,
we proceed to construct a groupoid 
whose regular representation and reduced \Csalgebra{}
model Elek's construction on the local kernel algebra.

Let $Z$ be a uniformly recurrent subgroup of a discrete group $\Gamma$,
fix $H\in Z$,
and let $S= \Schreier(H)$ be its Schreier graph.
For any $n\in\N$ we define an equivalence relation on the vertices $V(S)$ of $S$
by 
\[p\sim_n q \Leftrightarrow B_n(S,p) \cong_{r,l} B_n(S,q),\]
that is, if the $n$-balls around $p$ and $q$ are isomorphic 
under an isomorphism preserving the roots and labels.
Such a root-label isomorphism is necessarily unique.
Equivalently, if $\gamma_p$ and $\gamma_q$ are elements of $\Gamma$
describing paths from the root of $S$ to $p$ and $q$, respectively,
then $p \sim_n q$ if and only if 
$\gamma_p.S$ and $\gamma_q.S$ are $2^{-n}$-close
in the metric $d_{\Schreier}$ introduced on $\Schreier$ 
in Equation \eqref{eq:SchreierMetric}.
Let $E_n = V(S)/{\sim_n}$ denote the finite set of equivalence classes of ${\sim_n}$ 
equipped with the discrete topology
and the obvious connecting maps $e_{n+1}\colon E_{n+1}\rightarrow E_n$.
Then, as in \cite[Lemma 6.1.4]{Elek_URS}, it is easy to check that 
$\varprojlim E_n$ is homeomorphic to $Z$ as a subspace of $\sub{\Gamma}$,
which in turn is a Cantor space or a finite discrete set.
It is noteworthy that under this identification
the orbit of $H$ in $Z$ 
is exactly described by those elements in $\varprojlim E_n$,
which can be represented by the equivalence classes $[p]_n$
of a \emph{fixed} vertex $p \in S$.
%This is depicted in the left part of Figure \ref{fig:choice_of_vertices}.
The other elements 
%as in the right part of Figure \ref{fig:choice_of_vertices} 
describe subgroups in the orbit closure,
but not the orbit,
of $H$.
%\begin{figure}[ptb]
%  \centering
%  \includegraphics[width=0.9\textwidth]{figures/fixedAndChangingVertices.png}
%  \caption{
%    Subgroups in a URS $Z$
%    can be described in terms of isomorphism classes of balls
%    in $\Schreier(H)$ for any $H\in Z$.
%    isomorphism classes of balls around a \emph{fixed} vertex (left)
%    correspond to changing the root to that vertex
%    and therefore describe subgroups in the orbit of $H$.
%    Every other subgroup can be approximated by arbitrarily large balls,
%    but the central vertex of the ball 
%    necessarily has to change 
%    in a consistent manner (right).
%  }
%  \label{fig:choice_of_vertices}
%\end{figure}
All elements of $\G^x$ are therefore represented by
We employ this description of the space $Z$ 
to construct an ample Hausdorff \etale{} groupoid $\G$ 
with unit space $\G^0$ homeomorphic to $Z$, 
whose reduced groupoid $\Cs$-algebra is $\Csr(Z)$.

Let $\G^0$ be given by $\varprojlim E_n$ and
let $x=([x_0]_0, [x_1]_1, \ldots) \in \G^0$.
In slight abuse of notation we will continue to write $x_n$ for a representing vertex
of the class $[x_n]_n$ in $E_n$ 
that forms the $n$-coordinate of $x$.
The arrows of $\G^x$ will be given by equivalence classes 
of pairs $(x,\gamma)$ for $\gamma\in\Gamma$,
where we identify two such pairs $(x, \gamma)$ and $(x, \gamma')$ 
for $l(\gamma')\geq l(\gamma)$, 
if $\gamma x_{l(\gamma')} = \gamma' x_{l(\gamma')}$.
Note that $d(x_{l(\gamma)}, \gamma x_{l(\gamma)})$ 
might be strictly less that $l(\gamma)$, 
in which case there is another $\gamma'\in \Gamma$ 
of length $d(x_{l(\gamma)}, \gamma x_{l(\gamma)})$,
such that $(x, \gamma)$ and $(x, \gamma')$ denote the same arrow.
We call such $\gamma'$ of minimal length.
The range map is consequently defined as $r(x, \gamma) = x$.
We fix the source of $(x, \gamma)$ as 
$\gamma x \defeq ([\gamma x_{l(\gamma)}]_0, [\gamma x_{l(\gamma)+1}]_1, \ldots)$.
Intuitively, an arrow with range $x$ is thought of as a path from $x$ to $\gamma x$
seen as vertices in the Schreier graph,
but $\gamma x_n$ is only well-defined for $n\geq l(\gamma)$, 
in which case $\gamma x_n$ determines 
a unique isomorphism class of $(n-d)$-balls
and thereby a class in $E_{n-d}$ 
where $d=d(x_n, \gamma x_n)$
%as in Figure \ref{fig:determination}
.
%\begin{figure}[ptb]
%  \centering
%  \includegraphics[width=.6\textwidth]{figures/determination.png}
%  \caption{If $x_n$ is a class in $E_n$, 
%    then $\gamma x_n$ determines a class in $E_{n-l(\gamma)}$.
%  }
%  \label{fig:determination}
%\end{figure}
All elements of $\G^x$ are therefore represented by
a pair $(x, \gamma)$ with $\gamma$ of minimal length
and we can describe $(x,\gamma)$
as consistent choices of vertices $\gamma x_n$ 
in the balls described by the classes $[x_n]_n$.

More formally,
we define $\G$ as a subset of the projective limit $\varprojlim F_n$
of the finite, discrete sets
\begin{align}
  F_n &= 
  \bigsqcup\limits_{[x_n] \in E_n} B_n(S,x_n) \sqcup \{\centralstar_n\}
  \label{eq:Fn}
  \\
  &\cong 
  \bigsqcup\limits_{[x_n] \in E_n} 
  \left\{ 
    \gamma \in \Gamma \mid l(\gamma) \leq n 
  \right\}/{\approx_{x_n}}
  \sqcup \{\centralstar_n\}
  \label{eq:FnAlternative}
  ,
\end{align}
where $B_n(S,x_n)$ denotes the $n$-ball in $S$ 
that is determined uniquely by $[x_n]$,
even if there is a choice in the representing vertex $x_n$.

To avoid this choice of representing elements,
a pair $([x_n], y)$ with $y\in B_n(S, x_n)$ as in Equation \eqref{eq:Fn}
can be more readily expressed as a pair
$([x_n], \gamma)$ as in Equation \eqref{eq:FnAlternative},
where $\gamma$ is any chosen path
from $x_n$ to $y$ inside $B_n(S, x_n)$,
up to $\gamma \mathbin{\approx_{x_n}} \gamma'$
if both paths lead to the same vertex in $S$,
that is, if $\gamma.x_n = \gamma'.x_n$.
Implicit in this description is our later identification of $\G$
with a quotient of the transformation groupoid $Z \rtimes \Gamma$.
In this picture, $\centralstar_n$ fills in for 
choices of vertices that are not contained in $B_n(S, x_n)$
or respectively for $\gamma$ whose lengths as words in the generators
exceeds $n$.
The connecting maps $f_{n+1}\colon F_{n+1} \rightarrow F_n$ are then given by 
\begin{align*}
  ([x_{n+1}], \gamma) &\mapsto 
  \begin{cases}
    (e_{n+1}([x_{n+1}]), \gamma) 
    &\text{if } d(x_{n+1}, \gamma x_{n+1}) \leq n\\
    \centralstar_n &\text{else}
  \end{cases}\\
  \centralstar_{n+1} &\mapsto \centralstar_n
  .
\end{align*}

Now let $\G \defeq \varprojlim F_n \setminus \{\centralstar\}$ 
with $\centralstar=(\centralstar_1, \centralstar_2, \ldots)$, 
equipped with the subspace topology of the projective limit.
Equivalently, 
with $x$ and $y$ in $\G^{0}$
and $\gamma, \gamma' \in \Gamma$,
the topology on $\G$ is given by the metric
$d_\G((x, \gamma),(y, \gamma')) = 2^{-N}$ 
for $N$ maximal such that 
$([x_N]_N, \gamma.x_N)$ and $([y_N]_N, \gamma.y_N)$ coincide in $F_N$. 
To simplify notation, 
we write $(x, \gamma)$ in place of the equivalence class it represents
and identify $([x_n], \gamma)$ with $\centralstar_n$, whenever $l(\gamma) > n$.

If $(x, \gamma)$ and $(y, \gamma')$ are composable,
that is, if $x = \gamma'.y$, 
then we define the composition $(y,\gamma')(x,\gamma)$ 
to be $(y, \gamma\gamma')$.
Note, that $s((y,\gamma\gamma')) = s(x, \gamma')$, 
since $([x_0]_0, [x_1]_1, \ldots)$
coincides with $([x_N]_0, [x_{N+1}]_1, \ldots)$ 
in $\G^0$ for any $N\in\N$.
We see that any $x\in \G^0$ is a unit in $\G$ 
when written as $(x,e)$ for $e\in \Gamma$ the neutral element, 
and consequently the equivalence class represented by $(x,\gamma)$ 
has as inverse  the class represented by $(\gamma.x, \gamma^{-1})$.
\begin{proposition}
  Let $Z$ be a uniformly recurrent subgroup of a discrete group $\Gamma$.
  The above turns $\G$ into an
  ample minimal Hausdorff \etale{} groupoid 
  with unit space homeomorphic to $Z$.
\end{proposition}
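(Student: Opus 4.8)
The plan is to verify the groupoid axioms and the topological properties in turn, exploiting that $\G$ is by construction a subspace of the projective limit $\varprojlim F_n$ of finite discrete sets. Since such a projective limit is compact, metrizable and totally disconnected, most of the point-set topology comes for free, and the metric $d_\G$ introduced above realises the subspace topology; the actual content lies in checking that the structure maps are well defined on the equivalence classes of pairs $(x,\gamma)$ and that $r$ and $s$ are local homeomorphisms.

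First I would record that $\varprojlim F_n$ is the one-point compactification of $\G$, with $\centralstar$ as the point at infinity: a basic neighbourhood of $\centralstar$ consists of the elements agreeing with $\centralstar$ up to level $N$, i.e.\ of arrows $(x,\gamma)$ with $l(\gamma)>N$ together with $\centralstar$ itself, and its complement is a clopen, hence compact, subset of $\varprojlim F_n$ not containing $\centralstar$. This exhibits $\G$ as a locally compact, Hausdorff, totally disconnected space. The unit space is obtained from the embedding $x\mapsto (x,e)$, whose image picks out the root of each ball at every level and is therefore exactly $\varprojlim E_n$ sitting inside $\varprojlim F_n$; by the identification $\varprojlim E_n\cong Z$ established above this yields $\G^0\cong Z$.

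Next come the algebraic axioms. I would establish well-definedness by verifying that the source map, composition and inversion are compatible with the connecting maps $f_{n+1}$ and so descend to the projective limit; equivalently, that they do not depend on the choice of representative $\gamma$ in a class $(x,\gamma)$. The only delicate point is the source map: if $(x,\gamma)$ and $(x,\gamma')$ denote the same arrow, one must check that $\gamma x_n=\gamma' x_n$ for \emph{all} sufficiently large $n$, not merely at the single level $n=l(\gamma')$, which follows from the consistency of the balls $B_n(S,x_n)$ across levels together with the uniqueness of root-label isomorphisms noted above. Associativity, the unit laws and the inverse laws are then inherited directly from the group operations of $\Gamma$, since composition sends $(y,\gamma')(x,\gamma)$ to $(y,\gamma\gamma')$ and inversion sends $(x,\gamma)$ to $(\gamma.x,\gamma^{-1})$.

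For the \etale{} and ample properties I would exhibit an explicit basis of compact open bisections. For a compact open $U\subseteq\G^0$ and $\gamma\in\Gamma$, set $\Omega(U,\gamma)=\{(x,\gamma):x\in U\}$; these sets are clopen in $\G$, the range map $r$ restricts to a homeomorphism $\Omega(U,\gamma)\to U$, and the source map $s$ restricts to a homeomorphism onto $\gamma.U$, which is open because $\gamma$ acts on $\G^0\cong Z$ by a homeomorphism. Since these cylinders generate the topology, $\G$ has a basis of compact open bisections and is thus ample \etale{}. Minimality is then immediate from the dynamics: the $\G$-orbit of $x\in\G^0$ is $\{s(x,\gamma):\gamma\in\Gamma\}=\{\gamma.x:\gamma\in\Gamma\}$, which under $\G^0\cong Z$ is precisely the $\Gamma$-orbit of the corresponding subgroup, and this is dense because $Z$ is a URS and hence a minimal $\Gamma$-space. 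The main obstacle I anticipate is the bookkeeping around the equivalence relation on pairs — in particular confirming that $s$ is well defined and that the cylinders $\Omega(U,\gamma)$ are genuinely clopen in the projective-limit topology — rather than any conceptual difficulty.
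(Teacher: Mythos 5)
Most of your outline is sound and runs parallel to the paper's proof: the one-point compactification picture of $\varprojlim F_n$, the identification $\G^0=\varprojlim E_n\cong Z$, the cylinder sets on which $r$ and $s$ restrict to homeomorphisms (the paper's basic sets $U_{e_N,\gamma}$ are exactly your $\Omega(U,\gamma)$ for basic clopen $U$), and minimality from density of the $\Gamma$-orbits. The genuine gap is the step ``since these cylinders generate the topology, $\G$ has a basis of compact open bisections and is thus ample \etale{}.'' Being \etale{} presupposes that $\G$ is a \emph{topological} groupoid, i.e.\ that composition $\G^{(2)}\to\G$ and inversion $\G\to\G$ are continuous, and this does \emph{not} follow from having a basis of open bisections on which $r$ and $s$ restrict to homeomorphisms. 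For instance, on the group bundle $X\times\Z/2\Z$ over the convergent sequence $X=\{1/n : n\in\N\}\cup\{0\}$, the topology generated by the singletons over the points $1/n$ together with the tails of the two ``parity sections'' $\{(1/n,\,n\bmod 2):n\geq N\}\cup\{(0,0)\}$ and $\{(1/n,\,(n+1)\bmod 2):n\geq N\}\cup\{(0,1)\}$ has a basis of open bisections and $r=s$ is a local homeomorphism, yet multiplication is discontinuous at the composable pair over $0$. Continuity of the operations is precisely where the paper invests its main computation (the preimage of $U_{e_N,\gamma}$ under composition is the intersection of $\bigcup_{\eta}U_{e_N,\eta}\times U_{e_0,\eta^{-1}\gamma}$ with $\G^{(2)}$), and your proposal never addresses it.

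The omission is repairable entirely inside your framework, after which your proof is complete and is essentially the paper's argument reorganised around bisections. With the paper's convention $(y,\gamma')(x,\gamma)=(y,\gamma\gamma')$ for $x=\gamma'.y$, check that your basic bisections are closed under the operations: $\Omega(U,\gamma)^{-1}=\Omega(\gamma.U,\gamma^{-1})$ and $\Omega(V,\gamma')\,\Omega(U,\gamma)=\Omega\bigl(V\cap(\gamma')^{-1}.U,\;\gamma\gamma'\bigr)$, both of which are open because $\Gamma$ acts on $\G^0\cong Z$ by homeomorphisms. Under the parametrisation of cylinders by their ranges, composition restricted to the open set $\bigl(\Omega(V,\gamma')\times\Omega(U,\gamma)\bigr)\cap\G^{(2)}$ becomes the projection $(y,x)\mapsto y$ into $\Omega\bigl(V\cap(\gamma')^{-1}.U,\gamma\gamma'\bigr)$, and inversion restricted to $\Omega(U,\gamma)$ becomes $x\mapsto\gamma.x$; since sets of this form cover $\G^{(2)}$ and $\G$ respectively, both operations are continuous. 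Only then may you conclude that $\G$ is \etale{} (because $r$ is a local homeomorphism) and ample (an \etale{} groupoid with totally disconnected locally compact Hausdorff unit space), as the paper does.
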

\begin{proof} 
  It is easy to see that the operations above indeed turn $\G$ into a groupoid.
  Equipping $\G$ with the locally compact Hausdorff subspace topology of 
  $\varprojlim F_n$,
  we have to check that the defined operations are continuous.
  Consider the basis open sets 
  \[
    U_{e_N, \gamma} = \left\{
      (x, \gamma) \in \G \mid
      [x_N]_N = e_N
    \right\},
  \]
  that fix $\gamma \in \Gamma$
  and
  $e_N \in E_N$ for some $N$.
  The range map is obviously continuous,
  as any basic open set of $\varprojlim E_n$
  can be turned into a union of basic open sets of $\varprojlim F_n$
  by letting $\gamma$ vary.
  Inversion is continuous, since the action of $\Gamma$ on $Z$ is,
  which in turn makes the source map continuous.
  To see that the composition is continuous, 
  we fix $e_N \in E_N$ and $\gamma \in \Gamma$
  and find the preimage of $U_{e_N, \gamma}$ under the composition map.
  For $(x,\eta)(\eta^{-1}.x, \eta')$
  we first need that $x \in U_{e_N, \eta}$
  and secondly that $\eta' = \eta^{-1}\gamma$.
  Hence the desired preimage is described by the intersection of
  \[
    \bigcup\limits_{\eta \in \Gamma}
    U_{e_N, \eta}
    \times
    U_{e_0, \eta^{-1}\gamma}
  \]
  with the subspace $\G^{(2)}$ of composable pairs in $\G \times \G$
  and therefore open in $\G^{(2)}$.
  We used $e_0$ to denote the unique equivalence class in $E_0$.

  To see that the range map is a local homeomorphism,
  note that it restricts to a homeomorphism onto its image
  on every basic open set $U_{e_N, \gamma}$,
  and $\G$ is therefore \etale{}.
  Finally, $\G$ is ample as it is an \etale{} groupoid
  with totally disconnected unit space.

  As the orbits of $Z$ and $\G^{(0)}$ coincide, $\G$ is minimal.
\end{proof}

It is noteworthy that the construction above 
does not depend on the choice of $H\in Z$:
\begin{proposition}
  For different choices of $H,H'\in Z$ the groupoids $\G,\G'$ constructed above are isomorphic.
\end{proposition}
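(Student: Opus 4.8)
The plan is to observe that the entire construction of $\G$ depends on the chosen subgroup $H$ only through the inverse systems $(E_n, e_{n+1})$ and $(F_n, f_{n+1})$, and then to exhibit a canonical isomorphism of these systems for the data coming from $H$ and from $H'$. Writing $S=\Schreier(H)$ and $S'=\Schreier(H')$, the key point will be that $S$ and $S'$ display exactly the same finite local patterns. Once this is established, the towers defining $\G$ and $\G'$ are literally identified, the induced homeomorphism $\alpha\colon\G\to\G'$ restricts on units to the canonical identification $\varprojlim E_n\cong Z\cong\varprojlim E'_n$ recalled above, and it remains only to check that $\alpha$ intertwines the groupoid operations.

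The crux is the following claim: for every $n$, the set of $\rlisom$-classes of $n$-balls occurring in $S$ equals the set occurring in $S'$. To see this, let $B_n(S,\delta H)$ be an $n$-ball of $S$. Re-rooting identifies it with the $n$-ball at the root of $\Schreier(\delta H\delta^{-1})$, and $\delta H\delta^{-1}$ lies in the orbit of $H$, hence in $Z$. By minimality of the $\Gamma$-action on $Z$ the orbit of $H'$ is dense in $Z$, so some conjugate $\eta H'\eta^{-1}$ is $2^{-n}$-close to $\delta H\delta^{-1}$ in $d_{\Schreier}$; by the definition of this metric the $n$-balls at their roots are root-label isomorphic, and the latter is the $n$-ball $B_n(S',\eta H')$ of $S'$. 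Thus every $n$-ball-type of $S$ occurs in $S'$, and by symmetry the two sets coincide. I denote this common finite set by $\mathcal{E}_n$, noting that the connecting map ``forget the outer shell'' on ball-types is intrinsic, so that $(E_n,e_{n+1})$ and $(E'_n,e'_{n+1})$ are both identified with $(\mathcal{E}_n,\cdot)$.

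With $E_n=\mathcal{E}_n$ in hand, the sets $F_n=\bigsqcup_{e\in\mathcal{E}_n}B_n(e)\sqcup\{\centralstar_n\}$ of Equation \eqref{eq:Fn} depend only on the abstract balls $B_n(e)$ indexed by the types $e\in\mathcal{E}_n$, not on $H$, and the connecting maps $f_{n+1}$ are given by the same type-intrinsic rule; hence $(F_n,f_{n+1})=(F'_n,f'_{n+1})$. Passing to projective limits and deleting $\centralstar=(\centralstar_1,\centralstar_2,\ldots)$ yields a canonical homeomorphism $\alpha\colon\G\to\G'$ which is the identity in the coordinates over each $\mathcal{E}_n$.

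It then remains to verify that $\alpha$ is a homomorphism of groupoids, and this is routine precisely because the structure maps of $\G$ are written purely in terms of $\Gamma$ and the common tower: the range is $r(x,\gamma)=x$, the source is $s(x,\gamma)=\gamma x$ for the conjugation action on units, composition is $(y,\gamma')(x,\gamma)=(y,\gamma\gamma')$, and two pairs $(x,\gamma),(x,\gamma')$ are identified exactly when $\gamma^{-1}\gamma'$ fixes the base vertex of $x$, equivalently when it lies in the subgroup of $Z$ corresponding to $x$. None of these references $H$ beyond the tower, so $\alpha$ commutes with all of them and is $\Gamma$-equivariant on units; hence it is an isomorphism of ample \etale{} groupoids. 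The main obstacle is the crux claim above: everything else is bookkeeping, and the argument hinges on using minimality of the URS to match the finite ball-types of $S$ and $S'$.
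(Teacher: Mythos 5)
Your proof is correct and follows essentially the same route as the paper: the core step---using minimality of $Z$ to show that the sets of $\rlisom$-classes of $n$-balls occurring in $\Schreier(H)$ and $\Schreier(H')$ coincide, via re-rooting at a vertex and density of the orbit of $H'$, then concluding by symmetry---is exactly the paper's argument. The only difference is that you spell out the bookkeeping (the identification of the towers $(F_n, f_{n+1})$ and the check that the groupoid operations are intertwined) that the paper compresses into the opening phrase ``this amounts to showing that $E_n$ and $E_n'$ are identical sets of rooted, labeled balls.''
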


\begin{proof}
  This amounts to showing that the root-label equivalence classes 
  $E_n$ and $E_n'$ of $n$-balls in $S=S_\Gamma^Q(H)$ and $S'=S_\Gamma^Q(H')$ 
  are identical sets of rooted, labeled balls.
  That is, for every $p\in V(S)$ there is $p'\in V(S')$ 
  such that $B_n(S,p) \rlisom B_n(S',p')$.
  But as $Z$ is uniformly recurrent, 
  for $p=\gamma H$ the subgroup $\gamma H \gamma^{-1}$ is in the orbit closure of $H'$
  and therefore $S_\Gamma^Q(\gamma H \gamma^{-1})$ is in the orbit closure of $S'$.
  Hence there is $\gamma'\in\Gamma$ such that 
  $B_n(S, p) 
  \rlisom B_n(S_\Gamma^Q(\gamma H \gamma^{-1}), \gamma H \gamma^{-1}) 
  \rlisom B_n(S', \gamma'H')$
  and $p'=\gamma'H'$ gives $E_n\subseteq E_n'$.
  Equality follows by symmetry.
\end{proof}

Next, we identify the local kernel algebra $\lockerZ$ of $Z$ with a dense subset of $\C_c(\G)$.
Given a kernel $K\in \lockerZ$ as a function on $V(S) \times V(S)$, 
recall that there is a minimal $N\in \N$ called the width of $K$ 
such that $K$ vanishes on any pairs $(x,y)$
where the distance of $x$ and $y$ is more than $N$
and such that $B_N(S,x)\rlisom B_N(S,y)$
implies that $K(x,\gamma x) = K(y,\gamma y)$ 
if $l(\gamma)\leq N$,
so that $K$ only depends on the root-label isomorphism class of $N$-balls.
To $K$ we assign a function $f_K\in\C_c(\G)$ 
by $f_K((x,\gamma)) = K(x_M, \gamma x_M)$ 
with $M=\max\{N, l(\gamma)\}$.
If $\gamma$ is chosen of minimal length, we may pick $M=N$.
Equivalently, $f_K$ evaluates $(x,\gamma)$ at it's component $([x_N]_N, \gamma)$
in $F_N$ and assigns $K(x_N, \gamma x_N)$, 
where $K(\centralstar_N) = 0$.
This is well-defined, 
as the vertex $x_N$ is given up to root-label isomorphism
of $N$-balls.
The function $f_K$ is continuous, 
because it is uniformly locally constant:
It is constant on any $2^{-N}$-ball in $\G$.
To see that $f_K$ is compactly supported, 
note that the embedding $\G \hookrightarrow \varprojlim F_n$ 
is the one-point compactification of $\G$.
Therefore, a set $U\subseteq \G$ is relatively compact, 
exactly if there is $M\in\N$ 
such that no element of $U$ has $F_M$-component $\centralstar_M$.
Equivalently, $U$ as a subset of $\varprojlim F_n$ 
does not intersect the $2^{-M}$-ball centred at $\centralstar$.
As $f_K$ is supported outside of the $2^{-N}$-ball centred at $\centralstar$ 
for $N$ the width of $K$, 
it is compactly supported.

Conversely, 
any locally constant function $f\in \C_c(\G)$ defines a kernel $K_f \in \lockerZ$.
As $f$ is locally constant, for each $g\in \G$ we can pick a Ball centred at $g$, 
on which $f$ is constant. 
Then finitely many of such balls cover the support of $f$ 
and we may pick $N\in \N$ such that these have radius at least $2^{-N}$.
Since two $2^{-N}$-balls in $\G$ or $\varprojlim F_n$ are either disjoint or equal, 
$f$ is constant on any $2^{-N}$-ball 
and supported outside of the $2^{-N}$-ball of $\centralstar$.
Given such $f$, we may now define a local kernel $K_f$ with width at most $N$ as follows:
For a fixed vertex $p \in V(S)$ 
consider the unit $[[p]] \coloneq ([p]_0, [p]_1, \ldots) \in \G^0$ 
and define $K_f(p, \gamma p) \defeq f([[p]], \gamma)$.
If $\gamma p = \gamma'p$, then $([[p]], \gamma) = ([[p]], \gamma')$,
so $K_f$ is a well-defined kernel.
As the $F_N$-component of $([[p]], \gamma)$ is $\centralstar_N$ if $d(p, \gamma p)>N$, 
we have $K_f(p, \gamma p)=0$ in that case.
Furthermore, if the $N$-balls $B_N(S,p)$ and $B_N(S,q)$ are root-label-isomorphic 
for $p,q\in V(S)$ and $l(\gamma)\leq N$, 
then $([[p]], \gamma)$ and $([[q]], \gamma)$ are $2^{-N}$-close, 
since their first $N$ components coincide.
As $f$ is constant on $2^{-N}$-balls, 
we have $K_f(p,\gamma p) = K_f(q,\gamma q)$, 
so $K_f$ is local of width $N$.

It is easy to check that $f_{K_f}=f$ and $K_{f_K}=K$, 
so the local kernel algebra $\lockerZ$ of $Z$ is in bijection 
with the subset of locally constant functions in $\C_c(\G)$,
which is dense in $\C_c(\G)$, as $\G$ is totally disconnected.
As the \staralgebra{} structure of $\lockerZ$ is preserved under this inclusion, 
we identify $\lockerZ$ with a \star{}-subalgebra of $\C_c(\G)$:
For $p \in V(S)$ and $\gamma\in \Gamma$ we have $s([[p]], \gamma) = [[\gamma.p]]$ 
and the orbit of $[[p]]$ in $\G$ is $\{[[q]] \mid q\in V(S)\}$.
We calculate
\begin{align*}
  f_{L\ast K}([[p]],\gamma) 
  &= L\ast K(p,\gamma p) 
  = \sum\limits_{q\in V(S)} 
  L(p,q) K(q,\gamma p)
  \\
  &= \sum\limits_{([[p]], \gamma') \in \G^{[[p]]}} 
  f_L([[p]], \gamma') f_K([[\gamma'p]], \gamma(\gamma')^{-1})
  = f_L \ast f_K([[p]],\gamma)
  \intertext{and}
  (f_K)^\ast([[p]],\gamma) 
  &= \overline{f_K([[\gamma p]], \gamma^{-1})} 
  = \overline{K(\gamma p, p)} 
  = K^\ast(p, \gamma p) 
  = f_{K^\ast}([[p]], \gamma).
\end{align*}
We conclude that $f_{L\ast K} = f_L \ast f_K$ and $(f_K)^\ast = f_{K^\ast}$ 
on the subset $\{([[p]], \gamma) \mid p\in V(S), \gamma \in \Gamma\}\subseteq \G$
of arrows in $\G$, whose range is given 
by the equivalence classes of a single, constant vertex $p$ in $S$.
This subset is dense, as the orbit closure of $S$ is dense in $S_\Gamma^Q(Z)$ 
and as the functions in question are continuous,
they coincide on all of $\G$.

Next, we assert that $\lockerZ$ is dense in $\C_c(\G)$ even in reduced norm 
and that the reduced norm on $\lockerZ\subseteq \C_c(\G)$
coincides with the norm obtained by representing $\lockerZ$ on $\B(l^2(\Gamma/H))$.

\begin{theorem}
  Let $Z$ be a uniformly recurrent subgroup 
  of a finitely generated group $\Gamma$
  and $\G$ as above.
  The $\Cs$-algebras $\Csr(Z)$ and $\Csr(\G)$ are isomorphic
  with the isomorphism extending the canonical construction above uniquely.
\end{theorem}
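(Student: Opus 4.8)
The plan is to leave the algebraic identification untouched and reduce everything to a statement about norms. Since $K \mapsto f_K$ has already been shown to be a \star{}-isomorphism of $\lockerZ$ onto the locally constant functions in $\C_c(\G)$, a subalgebra that is dense in $\C_c(\G)$ in the inductive-limit topology and hence dense in $\Csr(\G)$, it suffices to prove that for every $K \in \lockerZ$ the two reduced norms agree, i.e.\ $\|f_K\|_{\Csr(\G)} = \|K\|_{\Csr(Z)}$. The isomorphism then extends uniquely by continuity, and its uniqueness is immediate from density.

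First I would pin down the groupoid's regular representations. For a unit $u \in \G^0$ let $\pi_u$ denote the representation of $\C_c(\G)$ on $\ell^2(\G_u)$ by left convolution, so that $\|\cdot\|_{\Csr(\G)} = \sup_{u \in \G^0}\|\pi_u(\cdot)\|$. Taking $u = [[H]]$, the unit attached to the chosen root, the source fibre $\G_{[[H]]}$ is in canonical bijection with $V(S) = \Gamma/H$ via $([[\gamma H]],\gamma^{-1}) \leftrightarrow \gamma H$, which induces a unitary $\ell^2(\Gamma/H) \cong \ell^2(\G_{[[H]]})$. Under this unitary I would check, directly from the convolution identity $f_{L\ast K} = f_L \ast f_K$ established above, that $\pi_{[[H]]}(f_K)$ is exactly Elek's operator $(Kf)(x) = \sum_y K(x,y) f(y)$; concretely, evaluating $\pi_{[[H]]}(f_K)\delta_w$ at the arrow indexing the vertex $v$ returns $f_K([[v]],\,\cdot\,) = K(v,w)$, matching the kernel product on the orbit $\{[[q]] : q \in V(S)\}$ of $[[H]]$. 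This is the bookkeeping step, where one must get the fibre index, the direction of the arrows, and the involution right; it yields $\|\pi_{[[H]]}(f_K)\| = \|K\|_{\Csr(Z)}$.

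The crux is then to show that this single fibre already computes the full reduced groupoid norm, namely $\|f_K\|_{\Csr(\G)} = \|\pi_{[[H]]}(f_K)\|$. Here I would combine three facts: (i) by minimality of $\G$ the orbit $\mathcal O$ of $[[H]]$, which is precisely $\{[[q]] : q \in V(S)\}$, is dense and invariant in $\G^0 \cong Z$; (ii) the representations $\pi_v$ for $v$ in a common orbit are unitarily equivalent, by right translation by a connecting arrow, so $v \mapsto \|\pi_v(f_K)\|$ is constant and equal to $\|\pi_{[[H]]}(f_K)\|$ on $\mathcal O$; and (iii) the standard fact that for $f \in \C_c(\G)$ the map $u \mapsto \|\pi_u(f)\|$ is lower semicontinuous on $\G^0$. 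If some $w$ satisfied $\|\pi_w(f_K)\| > \|\pi_{[[H]]}(f_K)\|$, then by (iii) the set $\{u : \|\pi_u(f_K)\| > t\}$ would be an open neighbourhood of $w$ for a threshold $t$ strictly between the two values, and by (i) it would meet $\mathcal O$, contradicting (ii). Hence $\|\pi_w(f_K)\| \le \|\pi_{[[H]]}(f_K)\|$ for all $w$, so taking the supremum gives $\|f_K\|_{\Csr(\G)} = \|\pi_{[[H]]}(f_K)\| = \|K\|_{\Csr(Z)}$, which is what we wanted.

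I expect the main obstacle to lie in this last reduction rather than in the algebraic identification. The reduced groupoid norm is intrinsically a supremum over all units in $Z$, whereas Elek's norm only ever inspects the single fibre over the root; it is the interplay of minimality with lower semicontinuity that collapses the supremum to that one fibre and thereby closes the gap. The identification of $\pi_{[[H]]}$ with Elek's representation in the second step, while demanding care with conventions, is routine once the source fibre and convolution are fixed.
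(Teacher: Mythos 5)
Your proposal is correct, and it shares the paper's skeleton: the \star{}-isomorphism $K \mapsto f_K$ onto the locally constant functions in $\C_c(\G)$, the unitary $l^2(\Gamma/H) \cong l^2(\G_{[[H]]})$ intertwining Elek's representation with $\pi_{[[H]]}$, density of $\lockerZ$ in reduced norm, and the reduction of the whole theorem to showing that the single fibre over $[[H]]$ already computes $\sup_u \|\pi_u(\cdot)\|$. The genuine difference lies in how that last point is proved. The paper does it by hand: for $K$ of width $R$ it picks a near-maximizing vector $h \in l^2(\G_{[[H]]})$ supported on arrows of length at most $N$, uses minimality to find a unit $y$ in the orbit of an arbitrary $x$ whose $F_{N+R}$-component agrees with that of $[[H]]$, transports $h$ along the induced bijection of balls so that $\pi_y(f_K)h'$ is the transport of $\pi_{[[H]]}(f_K)h$, obtains $\|\pi_x(f_K)\| \geq \|\pi_{[[H]]}(f_K)\| - \eps$, and gets the inequality actually needed for the theorem ``by symmetry''. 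You instead quote the general lemma that $u \mapsto \|\pi_u(f)\|$ is lower semicontinuous on $\G^0$ for $f \in \C_c(\G)$ on a Hausdorff \etale{} groupoid, and combine it with constancy of the fibre norm on the dense orbit of $[[H]]$; this directly yields the nontrivial inequality $\|\pi_w(f_K)\| \leq \|\pi_{[[H]]}(f_K)\|$ for every unit $w$, using density only once. Both routes are sound. Yours is softer and isolates the abstract mechanism — minimality plus lower semicontinuity collapses the supremum to any one fibre — and would prove the analogous statement for any minimal Hausdorff \etale{} groupoid with a distinguished dense orbit; its cost is the semicontinuity lemma itself, which you must cite or prove, and whose proof (transporting finitely supported vectors to nearby fibres along compact open bisections) is precisely the paper's concrete transport argument in abstract form. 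So the two proofs differ mainly in packaging: the paper inlines the semicontinuity argument using the concrete metric structure (widths, balls, $F_n$-components), whereas you factor it out as a standard lemma; likewise your appeal to the inductive-limit topology for density is the abstract counterpart of the paper's explicit estimate $\|f\|_r \leq (|Q|+1)^{R}\|f\|_\infty$.
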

Recall that for an \etale{} groupoid $\G$ 
the reduced norm on $\C_c(\G)$ is given as $\|f\| = \sup_x \|\pi_x(f)\|$ 
with the representations $\pi_x\colon \C_c(\G) \rightarrow \B(l^2(\G_x))$ 
by \[\pi_x(f)\delta_g = \sum\limits_{g'\in \G_{r(g)}} f(g') \delta_{g'g}\] 
for $x\in \G^0$ and $g\in \G_x$.
The (faithful) representation $\pi$ of $\lockerZ$ on $\B(l^2(\Gamma/H))$ 
is given by 
\[\pi(K)\delta_p = \sum\limits_{q\in V(S)} K(q,p) \delta_q.\]

\begin{proof}
  Let us first consider $x=[[H]]$, the unit represented by the root in $S$. 
  We obtain a map $V(S) \rightarrow \G_{[[H]]}$ 
  by $\gamma H \mapsto ([[\gamma H]], \gamma^{-1})$,
  mapping a vertex $\gamma H$ in $S$
  to the arrow in $\G$ that is described by any path from $H$ to $\gamma H$.
  This is obviously surjective and is well-defined, 
  as the arrows $([[\gamma H]], \gamma^{-1})$ and $([[\gamma' H]], (\gamma')^{-1})$ 
  are identified if $\gamma H=\gamma' H$.
  It is furthermore injective: 
  If $([[\gamma H]], \gamma^{-1}) = ([[\gamma' H]], (\gamma')^{-1})$, 
  then the $N$-balls centred at $\gamma H$ and $\gamma' H$ 
  are root-label isomorphic 
  with the isomorphism mapping $H$ to $H$
  if $N> l(\gamma) + l(\gamma')$.
  But a root-label-isomorphism of $N$-balls in Schreier graphs 
  is the identity if it has a fixed point,
  hence $\gamma H = \gamma' H$.
  We have thus established a bijection between $\G_{[[H]]}$ and $\Gamma/H = V(S)$.
  This yields a unitary $T\colon l^2(V(S)) \rightarrow l^2(\G_{[[H]]})$ 
  which intertwines the representations $\pi$ and $\pi_{[[H]]}$ on $\lockerZ$:
  Let $h\in l^2(V(S))$. 
  Then
  \begin{align*}
    \pi(K) h (\gamma H) 
    &= \sum\limits_{\gamma' H \in V(S)} K(\gamma H, \gamma' H) h(\gamma' H)
    \intertext{and so}
    T\left( \pi(K) h\right)([[\gamma H]], \gamma^{-1})
    &= \sum\limits_{\gamma' H \in V(S)} 
    K(\gamma H, \gamma' H) h(\gamma' H)
    = \sum\limits_{\gamma' H \in V(S)} 
    f_K([[\gamma H]], \gamma' \gamma^{-1}) h(\gamma' H)
    \\
    &= \sum\limits_{\gamma' H \in V(S)} 
    f_K([[\gamma H]], \gamma' \gamma^{-1}) (Th)([[\gamma' H]], (\gamma')^{-1})
    \\
    &= \sum\limits_{g\in \G_{[[H]]}} 
    f_k(([[\gamma H]], \gamma^{-1})g^{-1}) (Th)(g)
    = \left( \pi_{[[H]]}(f_K) Th \right)([[\gamma H]], \gamma^{-1}),
  \end{align*}
  and therefore the representations $\pi$ and $\pi_{[[H]]}$ define identical reduced norms 
  on $\lockerZ\subseteq \C_c(\G)$.

  Morally, this already implies 
  that all source-fibre representations $\pi_x$ for $x\in \G^0$ 
  are unitarily equivalent to $\pi$,
  since the groupoid $\G$ does not depend on the choice of $H\in Z$ in its construction 
  and for every $x\in \G^0$ there is a unique subgroup in $Z$
  which is mapped to $x$ under the homeomorphism $Z\cong \G^0$.
  For completeness we nevertheless show 
  that all reduced representations of $\G$ are equivalent.
  As in any groupoid, the representations $\pi_x$ are unitarily equivalent 
  for any two units $x$ that share an orbit. 
  Therefore we only need to consider $\pi_x$ for $x\in \G^0$ 
  that corresponds to a subgroup $H'$ in $Z\setminus \Gamma.H$.
  For fixed $K\in \lockerZ$ of width $R$ and $\eps>0$, 
  we may pick $h\in l^2(\G_{[[H]]})$ of norm one 
  such that $\|\pi_{[[H]]}(f_K)h\|_2 > \|\pi_{[[H]]}\left( f_K \right)\|-\eps$
  and $h$ is supported on arrows $([[\gamma H]], \gamma^{-1})$ 
  with $l(\gamma) \leq N$ for some $N\in \N$.
  As the orbit of $H'$ in $Z$ is dense, 
  there is a unit $y$ in the orbit of $x$ in $\G^0$,
  such that $y$ and $[[H]]$ are $2^{-(N+R)}$-close,
  that is, their $F_{N+R}$-components coincide.
  Hence, for every arrow $([[\gamma H]], \gamma^{-1}) \in \G_{[[H]]}$ 
  with $l(\gamma) \leq N+R$
  there is a \emph{unique} arrow $(\gamma.y, \gamma^{-1}) \in \G_{y}$
  %where $\gamma.y 
  %= \gamma.([y_0]_0, [y_1]_1, \ldots) 
  %= ([\gamma y_{l(\gamma)}]_0, [\gamma y_{l(\gamma)+1}]_1, \ldots)$
  %is the range of the unique arrow with source $y$ 
  that is described by the same $\gamma^{-1}$,
  since two paths of length less than $N+R$ starting at $H$
  end in the same vertex,
  exactly if the analogous paths in the isomorphic $N+R$-ball of $y$ do.
  This yields a bijection between the subspaces of $\G_{[[H]]}$ and $\G_y$
  described by elements of $\Gamma$ with length at most $N+R$.
  Extending by zero, 
  we transport $h\in l^2(\G_{[[H]]})$ to a function $h' \in l^2(\G_y)$ 
  with $1 = \|h\|_2 = \|h'\|_2$
  along this bijection.
  Noting that $\pi_{[[H]]}(f_K)h \in l^2(\G_{[[H]]})$ 
  is supported on $([[\gamma H]], \gamma^{-1})$ with $l(\gamma) \leq N+R$, 
  we may likewise transport this to a function in $l^2(\G_y)$ of the same norm.
  It is now easy to see that this function will just be $\pi_y(f_K)h'$,
  whence $\|\pi_y(f_K)\| \geq \|\pi_y(f_K)h'\|_2 > \|\pi_{[[H]]}(f_K)\| - \eps$
  and by unitary equivalence $\|\pi_x(f_K)\| > \|\pi_{[[H]]}(f_K)\| - \eps$.
  By symmetry we obtain the converse direction,
  implying that all norms on $\C_c(\G)$ induced by reduced representations are identical
  and coincide on $\lockerZ \subseteq \C_c(\G)$ with the reduced norm of $\lockerZ$.

  We finally show that $\lockerZ$ is dense in $\C_c(\G)$ in this reduced norm, 
  whence $\Csr(Z) \cong \Csr(\G)$.
  Let $f\in \C_c(\G)$. 
  As $f$ is compactly supported, it vanishes on the
  $2^{-R}$-ball around $\centralstar \in \varprojlim F_n$ for some $R$,
  so that $f([[\eta H]], \gamma^{-1}) = 0$ 
  if $\gamma$ is chosen of minimal length and yet $l(\gamma)>R$.
  We therefore find that
  \begin{align*}
    \|\pi_{[[H]]}(f)h\|^2_2 
    &= \sum\limits_{\gamma H \in \Gamma/H} 
    \left|\left(\pi_{[[H]]}(f)h\right)([[\gamma H]], \gamma^{-1})\right|^2\\
    &= \sum\limits_{\gamma H \in \Gamma/H} 
      \bigg|
        \sum\limits_{ \gamma'H \in B_R(S, \gamma H)} 
          f([[\gamma H]], \gamma'\gamma^{-1}) h([[\gamma'H]], (\gamma')^{-1})
      \bigg|^2\\
    &\leq \|f\|_\infty^2 \sum\limits_{\gamma H \in \Gamma/H} 
      \bigg|
        \sum\limits_{ \gamma'H \in B_R(S, \gamma H)} h([[\gamma'H]], (\gamma')^{-1})
      \bigg|^2\\
    &\leq \|f\|_\infty^2 
    \big\| \sum\limits_{\eta\in\Gamma,\,l(\eta)\leq R} h \big\|^2_2
    \leq (|Q|+1)^{R} \|f\|_\infty^2 \|h\|_2^2,
  \end{align*}
  as there are less than $(|Q| + 1)^R$ elements
  of length at most $R$ in $\Gamma$.
  Hence $\|f\|_r \leq (|Q|+1)^{R} \|f\|_\infty$ 
  for all $f\in \C_c(\G)$ 
  supported outside of the $2^{-R}$-ball around $\centralstar$
  with $\|f\|_r = \|\pi_{[[H]]}(f)\|$ the unique reduced norm.
  For any $\epsilon>0$, 
  by partitioning $\G$ into open $2^{-N}$-balls for large $N$,
  we may approximate $f$ in $\C_c(\G)$ up to $\eps$ by a function $f_\eps$ 
  that is constant on every $2^{-N}$-ball.
  As two balls are either disjoint or one is contained in the other,
  we may choose $f_\eps(g)$ supported outside of the $2^{-R}$-ball of $\centralstar$
  for large $N$,
  such that 
  $\|f-f_\eps\|_r \leq (|Q|+1)^{R} \|f-f_\eps\|_\infty 
  = \eps(|Q|+1)^{R}$.
\end{proof}

To recap, for any uniformly recurrent subgroup $Z$ 
we have constructed an ample minimal \etale{} Hausdorff groupoid 
with unit space homeomorphic to $Z$, 
such that the reduced $\Cs$-algebras $\Csr(Z)$ of $Z$ and $\Csr(\G)$ of $\G$ coincide.
This enables us to examine $\Csr(Z)$ using tools for groupoid $\Cs$-algebras.

\section{Relation to the Transformation Groupoid}
\label{sec:trafo}
In this section we shed some light on the relationship
between our newly defined groupoid $\G$
associated with a uniformly recurrent subgroup $Z$ of $\Gamma$
and the transformation groupoid $Z \rtimes \Gamma$
associated with the action of $\Gamma$ on $Z$ by conjugation.

As a space, 
the transformation groupoid $Z \rtimes \Gamma$
is simply the cartesian product $Z \times \Gamma$
equipped with the product topology.
The unit space is given by the subspace $Z \times \{e\}$
and identified with $Z$,
the range and source of an arrow $(H, \gamma)$
are respectively given by $H$ and $\gamma^{-1}.H$,
while the product of two composable arrows is
$(H, \gamma) (\gamma^{-1}.H, \eta) = (H, \gamma\eta)$.
This turns $Z\rtimes \Gamma$ into a Hausdorff \etale{} groupoid,
which for a URS $Z$ is furthermore ample and minimal.

To distinguish our groupoid $\G$ from $Z\rtimes \Gamma$,
we describe the range fibres $\G^H$ further.
Recall that the homeomorphism between $Z$ and $\varprojlim E_n$
describes every group $H\in Z$ 
as a sequence of isomorphism classes of balls in $\Schreier(H')$
for any $H' \in \Gamma$.
In particular, the isomorphism class in $E_n$ associated with $H$
is given by the $n$-ball around the root in $\Schreier(H)$.
Two arrows $(H,\gamma)$ and $(H,\eta)$ in $\G$ 
will then coincide 
if and only if the paths in $\Schreier(H)$
that start at the root
and are described by $\gamma$ and $\eta$
end in the same vertex
so that $\gamma H = \eta H$
or equivalently $\eta^{-1}\gamma \in H$.
By this identification
we obtain a surjective map
$q\colon Z \rtimes \Gamma \rightarrow \G$
that maps 
$(H,\gamma) \mapsto ([[H]], \gamma)$.
To simplify this notation 
and remove the ambiguity in the choice of $\gamma$,
we continue to denote $([[H]], \gamma)$
as $(H, \gamma H)$.
We see that the range fibres $\G^H$
are given by the quotients $\Gamma/H$
while those of $(Z\rtimes \Gamma)^H$
are simply given by $\Gamma$
and $\G$ arises from the transformation groupoid
describing the action of $\Gamma$ on $Z$
without taking locality into account
by dividing out the appropriate subgroup in every fibre.

Indeed, 
this identification is not merely as sets,
but as topological groupoids:
\begin{proposition}
  The map $q\colon Z\rtimes \Gamma \rightarrow \G$
  given by $(H, \gamma) \mapsto (H, \gamma H)$
  for a subgroup $H\in Z$ and a group element $\gamma\in \Gamma$
  is a continuous, open, and surjective groupoid homomorphism.
  In particular,
  $\G$ is a quotient of the transformation groupoid $Z \rtimes \Gamma$.
\end{proposition}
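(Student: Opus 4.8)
\section*{Proof proposal}

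The plan is to verify the four asserted properties of $q$ separately, treating well-definedness, the groupoid-homomorphism property, and surjectivity as direct consequences of the two constructions and concentrating the real work on continuity and, above all, openness. For well-definedness, recall that $([[H]],\gamma)$ already denotes an equivalence class of pairs in $\G$, and two arrows $(H,\gamma),(H,\eta)$ of $Z\rtimes\Gamma$ have the same image exactly when $\gamma H=\eta H$; this is precisely the identification built into $\G$, so $q$ is a well-defined map. That $q$ is a groupoid homomorphism is a direct check against the two sets of structure maps: it matches up units ($(H,e)\mapsto([[H]],e)$), ranges, and sources, and it is multiplicative because the product in $\G$ was defined precisely by concatenating the group labels, so that the image of the composable product $(H,\gamma)(\gamma^{-1}.H,\eta)=(H,\gamma\eta)$ lands on the corresponding product of $([[H]],\gamma)$ and $([[\gamma^{-1}.H]],\eta)$ in $\G$; preservation of inverses is then automatic. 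Surjectivity follows once we recall that the homeomorphism $Z\cong\G^0\cong\varprojlim E_n$ realises every unit of $\G$ as $[[H]]$ for a unique $H\in Z$, and that every arrow of $\G$ with range $[[H]]$ is represented by some group element $\gamma$, hence equals $q(H,\gamma)$.

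For continuity it is convenient to use that $\Gamma$ is discrete, so that $Z\rtimes\Gamma=\bigsqcup_{\gamma\in\Gamma}Z\times\{\gamma\}$ is a disjoint union of clopen sheets and it suffices to show that each restriction $H\mapsto([[H]],\gamma)$ is continuous from $Z$ to $\G$. I would phrase this through the projective-limit pictures $Z\cong\varprojlim E_n$ and $\G\subseteq\varprojlim F_n$: for fixed $\gamma$ the assignment $[H]_n\mapsto([H]_n,\gamma)$, taking the value $\centralstar_n$ when $l(\gamma)>n$, defines maps $E_n\to F_n$ compatible with the connecting maps $e_{n+1}$ and $f_{n+1}$ (the relevant distance $d(x_{n+1},\gamma x_{n+1})$ is intrinsic and equals $d(x_n,\gamma x_n)$), hence a continuous map on the limits by the universal property. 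Concretely, whenever $H,H'$ agree in $E_M$, the images $([[H]],\gamma)$ and $([[H']],\gamma)$ agree through level $M$ of $\varprojlim F_n$, so $d_\G(q(H,\gamma),q(H',\gamma))\le 2^{-M}$ and $q(\cdot,\gamma)$ is (uniformly) continuous.

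The main point is openness, which I would again handle sheet-by-sheet by showing that $q$ carries the clopen cylinder bases to one another. Fix $\gamma\in\Gamma$ and an index $M\ge l(\gamma)$, and consider the basic clopen set $W=\{H\in Z\mid [H]_M=e_M\}$ of the sheet $Z\times\{\gamma\}$. I claim $q(W\times\{\gamma\})=U_{e_M,\gamma}$. Indeed, for $H\in W$ the arrow $([[H]],\gamma)$ has $F_M$-component determined by $e_M$ and $\gamma$ alone, so it lies in $U_{e_M,\gamma}$; conversely, any $(x,\gamma)\in U_{e_M,\gamma}$ has range $x=[[H_x]]$ for the unique $H_x\in Z$ with $[H_x]_M=e_M$, whence $(x,\gamma)=q(H_x,\gamma)\in q(W\times\{\gamma\})$. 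Since the sets $U_{e_M,\gamma}$ form a basis for the topology of $\G$ and the cylinders $W\times\{\gamma\}$ a basis for $Z\rtimes\Gamma$, this shows $q$ is open; equivalently, $q$ restricts to a homeomorphism of each clopen sheet $Z\times\{\gamma\}$ onto the clopen subset $\bigcup_{e_M}U_{e_M,\gamma}\subseteq\G$, exhibiting $q$ as a local homeomorphism.

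The step that needs the most care -- and the one I expect to be the main obstacle -- is exactly this identification of images of cylinders: it rests on the earlier facts that every unit of $\G$ is of the form $[[H_x]]$ for a unique $H_x\in Z$ and that fixing the representative $\gamma$ pins down a single arrow of $\G$ over each such range. One must also keep the bookkeeping straight between the index $M$ of the cylinder and the word length $l(\gamma)$, enlarging $M$ as needed so that the $F_M$-component genuinely records $\gamma$ rather than collapsing to $\centralstar_M$. Once this is handled the equality $q(W\times\{\gamma\})=U_{e_M,\gamma}$ is exact, and both continuity and openness drop out of the same cylinder computation.
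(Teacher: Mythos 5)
Your strategy is sound and genuinely different from the paper's at the one point where the real work happens. The paper proves continuity by computing the \emph{full} preimage $q^{-1}(U_{H,N,\gamma})$ — which is the delicate object, since it meets infinitely many sheets: it consists of all $(K,\gamma k)$ with $k\in K$ — and then shows this preimage is open using the fact that containment of a fixed group element $k$ is an open condition on the subgroup $K$ (the neighbourhood $V_{K,M,\gamma k}$ with $M=\max\{N,l(k)\}$). You sidestep global preimages entirely by decomposing $Z\rtimes\Gamma=\bigsqcup_{\gamma}Z\times\{\gamma\}$ into clopen sheets and checking continuity of each restriction $q(\cdot,\gamma)$; since the sheets are open this is legitimate, and it buys you a strictly stronger conclusion, namely that $q$ is a local homeomorphism ($q$ is injective on each sheet because the range of $q(H,\gamma)$ recovers $H$). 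Your openness argument is essentially the paper's but sharper: you identify $q(W\times\{\gamma\})=U_{e_M,\gamma}$ exactly (with the necessary bookkeeping $M\geq l(\gamma)$), where the paper only exhibits a basic neighbourhood around each point of the image. Well-definedness, surjectivity and the homomorphism property are waved through in both proofs at the same level of detail.

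There is, however, one sub-argument that is wrong as stated: the levelwise maps $E_n\to F_n$, $[H]_n\mapsto([H]_n,\gamma)$ with value $\centralstar_n$ whenever $l(\gamma)>n$, are \emph{not} compatible with the connecting maps $f_{n+1}$, so the universal property of the projective limit cannot be invoked for them. The point is that the $\centralstar$-condition in $f_{n+1}$ is governed by the graph distance $d(x_{n+1},\gamma x_{n+1})$, not by the word length: if $l(\gamma)=n+1$ but $d(H,\gamma H)\leq n$ (for instance if $\gamma\in H$ has length $n+1$), then $f_{n+1}(([H]_{n+1},\gamma))=([H]_n,\gamma H)\neq\centralstar_n$, whereas your level-$n$ map outputs $\centralstar_n$. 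For the same reason your maps do not compute $q(\cdot,\gamma)$: the arrow $([[H]],\gamma)$ has non-$\centralstar$ components from level $d(H,\gamma H)$ onward, not from level $l(\gamma)$ onward. The flaw is local and repairable, and your own ``concrete'' sentence already contains the repair: for $M\geq\max\{n,l(\gamma)\}$, the $F_n$-component of $([[H]],\gamma)$ is determined by $[H]_M$ and $\gamma$ alone (both the path labelled $\gamma$ and any shorter path to the same endpoint stay inside the $M$-ball, and a root-label isomorphism of $M$-balls carries one configuration to the other), so each composite $Z\to F_n$ is locally constant and $q(\cdot,\gamma)$ is continuous. Since you explicitly flag this length-versus-level bookkeeping as the delicate point, the proof stands once the universal-property sentence is replaced by that local-constancy argument.
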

\begin{proof}
  As argued above, $q$ is surjective and it is obviously a homomorphism.

  We fix a basis of the topology of $\G$ as
  \[ 
    U_{H,N,\gamma} =
    \left\{ 
      (K, \gamma K) 
      \mid
      d_{\Schreier}(\Schreier(H), \Schreier(K)) \leq 2^{-N}
    \right\}
  \]
  indexed by a subgroup $H\in Z$, 
  an element $\gamma\in \Gamma$
  and $N\in \N$ with $l(\gamma) \leq N$
  and consisting of all arrows $(K, \gamma K)$
  such that the $N$-balls around the root
  in $\Schreier(H)$ and $\Schreier(K)$ coincide.
  Equivalently, a group element $\eta \in \Gamma$
  with $l(\eta) \leq 2N$
  is contained in $K$
  if and only if it is contained in $H$.
  Likewise, we fix a basis
  \[
    V_{H,N,\gamma} = 
    \left\{ 
      (K, \gamma) 
      \mid
      d_{\Schreier}(\Schreier(H), \Schreier(K)) \leq 2^{-N}
    \right\}
  \]
  of the topology of $Z \rtimes \Gamma$.

  It is now easy to see that $q$ is open.
  Let $(K, \gamma K) \in q(V_{H,N,\gamma})$.
  Then $U_{K, M, \gamma}$
  for $M = \max\{N, l(\gamma)\}$
  is a neighbourhood of $(K, \gamma K)$
  contained in $q(V_{H, N, \gamma})$.

  Conversely,
  \[
    q^{-1}(U_{H,N,\gamma}) =
    \left\{
      (K, \gamma k) \mid
      d_{\Schreier}(\Schreier(H), \Schreier(K)) \leq 2^{-N},\,
      k\in K
    \right\},
  \]
  and we claim that this is open in $Z\rtimes \Gamma$.
  Indeed,
  for every $(K,\gamma k) \in q^{-1}(U_{H,N,\gamma})$
  the neighbourhood $V_{K,M,\gamma k}$
  is contained in $q^{-1}(U_{H,N,\gamma})$,
  where $M = \max\{N, l(k)\}$,
  so that any subgroup in the $2^{-M}$-ball around $K$
  is guaranteed to contain the element $k$.
\end{proof}

\section{Simplicity and Nuclearity}
\label{sec:simNuc}

We employ our description of the Elek algebras as groupoid algebras
to give simplified proofs of Elek's characterisations,
explaining why they arise in the language of groupoids.
\begin{proposition}\label{prop:generic-principal}
	If $Z$ is generic, then $\G$ is principal.
\end{proposition}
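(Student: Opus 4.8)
The plan is to show that $\G$ has trivial isotropy at every unit, which for an \etale{} groupoid is exactly what it means to be principal. Concretely, I would fix a unit $x \in \G^0$, identify it with the corresponding subgroup $H \in Z$ under the homeomorphism $\G^0 \cong Z$, and compute the isotropy group $\G_H^H = \{g \in \G : r(g) = s(g) = H\}$, showing that genericity forces it to consist of the unit alone.

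First I would recall the description of arrows from Section~\ref{sec:trafo}: every arrow with range $H$ is of the form $(H, \gamma H)$ for some $\gamma \in \Gamma$, where $(H, \gamma H)$ and $(H, \eta H)$ denote the same arrow precisely when $\gamma H = \eta H$, i.e.\ $\eta^{-1}\gamma \in H$, and where the source of $(H, \gamma H)$ is $\gamma^{-1}.H = \gamma^{-1} H \gamma$ (this reading of the source is legitimate because $q\colon Z\rtimes\Gamma \to \G$ is a groupoid homomorphism that is the identity on units, so it transports the source $\gamma^{-1}.H$ of $(H,\gamma)$ in $Z\rtimes\Gamma$ to the source of $(H,\gamma H)$ in $\G$). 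An arrow thus lies in $\G_H^H$ exactly when source equals range, that is, when $\gamma^{-1} H \gamma = H$. This says precisely that $\gamma$ normalises $H$, so $\gamma$ belongs to the stabiliser $\mathrm{Stab}(H) = \{\eta \in \Gamma : \eta H \eta^{-1} = H\}$ of $H$ for the conjugation action. Since $H \subseteq \mathrm{Stab}(H)$ always, the coset identification shows that $\G_H^H$ is in bijection with the coset space $\mathrm{Stab}(H)/H$; in particular it is trivial if and only if $\mathrm{Stab}(H) = H$.

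At this point I would simply invoke the definition of genericity: $Z$ being generic means that the stabiliser of every $H \in Z$ is as small as possible, namely $\mathrm{Stab}(H) = H$. Hence every isotropy arrow $(H, \gamma H)$ has $\gamma \in H$, so $\gamma H = H$ and the arrow is the unit $(H, H)$. As every unit of $\G$ corresponds to some subgroup in $Z$ and genericity is a condition on all such subgroups (including those lying in the orbit closure but not the orbit of the chosen root), the argument applies to every unit, and $\G$ is principal. The argument is short, and the only point demanding care is the bookkeeping in the second paragraph: one must correctly read off the source $\gamma^{-1}.H$ of an arrow $(H, \gamma H)$, match the condition ``source $=$ range'' with membership in the conjugation stabiliser, and keep track of the coset identification $\gamma H = \eta H$ so as not to misidentify the isotropy. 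Once $\G_H^H$ is correctly pinned down as $\mathrm{Stab}(H)/H$, the conclusion is immediate from genericity.
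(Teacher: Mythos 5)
Your proof is correct, but it takes a genuinely different route from the paper's. The paper argues by contraposition entirely inside the projective-limit picture of Section \ref{sec:identification}: a non-unit isotropy arrow $(x,\gamma)$ yields, for all large $N$, vertices $x_N \neq \gamma.x_N$ at distance at most $l(\gamma)$ whose $(N-l(\gamma))$-balls are root-label isomorphic, and this contradicts Elek's combinatorial characterisation of genericity \cite[Proposition 2.3]{Elek_URS} (in a generic URS, vertices with large isomorphic balls must be far apart). You instead work with the quotient description of Section \ref{sec:trafo}: arrows with range $H$ are cosets $\gamma H \in \Gamma/H$, the isotropy at $H$ is identified with $\mathrm{Stab}(H)/H$, and genericity in its original form ($\mathrm{Stab}(H) = H$ for all $H \in Z$) forces it to be trivial. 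This is legitimate, since Section \ref{sec:trafo} precedes the proposition and your use of the homomorphism $q$ to read off sources is exactly what that section provides; note also that your bookkeeping is robust against the paper's slight ambiguity about whether the source of $(H,\gamma H)$ is $\gamma.H$ or $\gamma^{-1}.H$, because $\gamma$ normalises $H$ if and only if $\gamma^{-1}$ does. What your approach buys: it avoids the appeal to Elek's combinatorial Proposition 2.3, works directly from the definition of genericity, and actually proves more than the statement asks --- the isotropy bundle of $\G$ is exactly $H \mapsto \mathrm{Stab}(H)/H$, so $\G$ is principal \emph{if and only if} $Z$ is generic. What the paper's approach buys: it stays within the ball-isomorphism coordinates in which $\G$ was constructed, making the combinatorial content visible (a nontrivial stabiliser manifests as distinct nearby vertices with arbitrarily large isomorphic balls), which is the form in which genericity is exploited throughout Elek's original paper.
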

\begin{proof}
  Suppose $\G$ is not principal.
  Then there is a unit $x=([x_0]_0, [x_1]_1, \ldots)$
  and an arrow in the isotropy $\G_x^x$ that is not a unit.
  Therefore there is a group element $\gamma \in \Gamma$ 
  such that $x = \gamma.x$ but $(x, e) \neq (x, \gamma)$.
  Especially $\gamma.x_N \neq x_N$ for large N, 
  while $B_{N-l(\gamma)}(S,x_N) \rlisom B_{N-l(\gamma)}(S,\gamma.x_N)$ 
  and $d(x_N, \gamma.x_N)\leq  l(\gamma)$.
  By \cite[Proposition 2.3]{Elek_URS}, $Z$ is not generic.
\end{proof}

\begin{corollary}
	If $Z$ is generic, then $\Csr(Z)$ is simple.
\end{corollary}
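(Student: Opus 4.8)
The plan is to reduce the statement to a purely groupoid-theoretic one and then invoke a standard simplicity criterion. By the Theorem above there is a canonical isomorphism $\Csr(Z) \cong \Csr(\G)$, so it suffices to prove that $\Csr(\G)$ is simple. For this I would first collect the structural properties of $\G$ that have already been established: $\G$ is an ample, minimal, Hausdorff \etale{} groupoid with unit space homeomorphic to $Z$ (first Proposition of Section \ref{sec:identification}), and, under the genericity hypothesis, Proposition \ref{prop:generic-principal} shows that $\G$ is principal.

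The key step is then to apply the well-known result that for a second-countable, locally compact, Hausdorff, \etale{} groupoid $\G$ that is minimal and effective (topologically principal), the reduced groupoid \Csalgebra{} $\Csr(\G)$ is simple; this is Renault's simplicity criterion in the form given by Brown--Clark--Farthing--Sims. I would verify its hypotheses one by one. Hausdorffness and the \etale{} property are part of the first Proposition; second countability holds because $\Gamma$ is countable and $Z$---hence $\G^0$---is a Cantor space or a finite discrete set, so the projective limit defining $\G$ is second countable. Minimality is exactly the content of the last line of the first Proposition. Finally, effectiveness is immediate from genericity: by Proposition \ref{prop:generic-principal} the groupoid $\G$ is principal, meaning its isotropy bundle coincides with $\G^0$; since the unit space is open in an \etale{} groupoid, the interior of the isotropy equals $\G^0$, so $\G$ is effective (indeed topologically principal in the strongest possible sense). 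Assembling these facts, the criterion yields simplicity of $\Csr(\G) \cong \Csr(Z)$.

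The main obstacle is essentially bookkeeping rather than mathematics: one must ensure that the precise standing hypotheses of whichever simplicity theorem is cited are met---principally second countability together with the Hausdorff and \etale{} assumptions, all of which are already available for $\G$. If one prefers to avoid citing the general theorem, an alternative route is to argue directly: minimality guarantees that every nonzero ideal of $\Csr(\G)$ has full support on $\G^0$, while effectiveness ensures that the canonical conditional expectation $\Csr(\G) \to \C(\G^0)$ detects such an ideal faithfully, forcing it to be all of $\Csr(\G)$. Either way, the substantive input has already been supplied by the identification Theorem and by Proposition \ref{prop:generic-principal}, and the corollary is their formal consequence.
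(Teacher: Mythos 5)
Your proposal is correct and follows essentially the same route as the paper: it combines the identification $\Csr(Z)\cong\Csr(\G)$ with Proposition \ref{prop:generic-principal} and then invokes the standard simplicity criterion for minimal, effective (here even principal), Hausdorff \etale{} groupoids, which is exactly what the paper does by citing \cite[Proposition 4.3.7]{SimsNotes}. Your additional verification of second countability and the passage from principal to effective is careful bookkeeping that the paper leaves implicit, but it is not a different argument.
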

This reproduces \cite[Theorem 7]{Elek_URS}.
\begin{proof}
	By Proposition \ref{prop:generic-principal}, 
    $\G$ is a minimal principal \etale{} groupoid,
    and every such groupoid has a simple reduced \Csalgebra{}
    by \cite[Proposition 4.3.7]{SimsNotes}.
\end{proof}

Regarding nuclearity, we are able to add
the converse direction to Elek's characterisation
\cite[Theorem 8]{Elek_URS}.
We first describe when our groupoids are amenable.

\begin{theorem}
  \label{thm:locPropANuclear}
	Let $Z$ be a uniformly recurrent subgroup 
    of the finitely generated discrete group $\Gamma$
    and let $\G$ be the groupoid associated with $Z$.
    The Schreier graph $\Schreier(H)$ of any group $H \in Z$
    has local property $A$,
	if and only if $\G$ is (topologically) amenable.
\end{theorem}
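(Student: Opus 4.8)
The plan is to set up an explicit dictionary between the combinatorial data witnessing local property $A$ on $\Schreier(H)$ and the analytic data witnessing topological amenability of $\G$, exploiting two facts already established. First, \emph{local} functions on $\Schreier(H)$ correspond exactly to \emph{locally constant} functions in $\C_c(\G)$, so that locality translates into continuity and compact support translates into a uniform bound on the lengths of the arrows involved. Second, for a fixed vertex $p\in V(S)$ the range fibre $\G^{[[p]]}$ is in canonical bijection with $V(S)=\Gamma/H$ via $([[p]],\gamma)\mapsto \gamma p$, under which left translation by a bounded-length arrow acts as the identity on $V(S)$. I will use the mean characterisation of topological amenability: $\G$ is amenable iff there is a sequence of continuous fibrewise probability measures, given in the ample case by nonnegative $f_i\in\C_c(\G)$ with $\sum_{g\in\G^x}f_i(g)=1$, such that $\sum_{g\in\G^{r(h)}}\lvert f_i(h^{-1}g)-f_i(g)\rvert\to0$ uniformly for $h$ in compact subsets of $\G$. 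Recall that these compact subsets are precisely the sets of arrows supported outside a $2^{-R}$-ball around $\centralstar$, i.e.\ of bounded length.

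For the forward direction, given the local property $A$ functions $\rho^n$ of support radius $R_n$, I would define $f_n\in\C_c(\G)$ on the dense orbit of $[[H]]$ by $f_n([[p]],\gamma)=\lvert\rho^n_p(\gamma p)\rvert^2$. Well-definedness (independence of the representative $\gamma$), local constancy, and compact support all follow from the defining properties of $\rho^n$: the value depends only on the endpoint $\gamma p\in B_{R_n}(p)$ and, by locality, only on the root-label class of $B_{R_n}(p)$, so $f_n$ factors through $F_{R_n}$ and vanishes near $\centralstar$. Since $\sum_{\gamma p}\lvert\rho^n_p(\gamma p)\rvert^2=\lVert\rho^n_p\rVert_2^2=1$, each $f_n$ is a fibrewise probability measure, and by continuity the normalisation persists on all of $\G^0$. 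The displacement sum reduces, under the fibre identification above, to $\sum_{v\in V(S)}\bigl\lvert\,\lvert\rho^n_q(v)\rvert^2-\lvert\rho^n_p(v)\rvert^2\,\bigr\rvert$ for $q=\eta p$; for $h=([[p]],\eta)$ with $l(\eta)\le n$ one has $d(p,q)\le n$, so $\lVert\rho^n_p-\rho^n_q\rVert\le 1/n$ and hence, via $\lVert\lvert u\rvert^2-\lvert v\rvert^2\rVert_1\le(\lVert u\rVert+\lVert v\rVert)\lVert u-v\rVert$, the displacement is at most $2/n$, uniformly over the compact set of arrows of length at most $n$. Density and continuity then promote the bound from the orbit of $[[H]]$ to all of $\G$.

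For the converse I would start from amenability in its $\ell^2$ form, obtained by passing to $\xi_i=\sqrt{f_i}\in\C_c(\G)$: these are fibrewise unit vectors with $\sum_{g\in\G^{r(h)}}\lvert\xi_i(h^{-1}g)-\xi_i(g)\rvert^2\to0$ uniformly on compacts, since $\lvert\sqrt a-\sqrt b\rvert^2\le\lvert a-b\rvert$. Restricting to the dense orbit, set $\rho^n_p(v)=\xi_{i(n)}([[p]],\gamma)$ for $\gamma p=v$, where $i(n)$ is chosen so that the approximate invariance is at most $1/n$ on the compact set $C_n$ of arrows of length at most $n$. Then $\lVert\rho^n_p\rVert_2=1$; local constancy of $\xi_{i(n)}$ gives locality of $\rho^n$, with support radius that of $\xi_{i(n)}$; and for $d(p,q)\le n$, writing $q=\eta p$ with $l(\eta)\le n$ and $h=([[p]],\eta)\in C_n$, the same fibre computation yields $\lVert\rho^n_p-\rho^n_q\rVert_2^2=\sum_{g\in\G^{[[p]]}}\lvert\xi_{i(n)}(g)-\xi_{i(n)}(h^{-1}g)\rvert^2\le 1/n^2$. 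This is exactly local property $A$ for $\Schreier(H)$; since both $\G$ and the set of ball-isomorphism classes are independent of the chosen $H\in Z$, the statement for one $H$ is equivalent to that for any.

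The main obstacle I anticipate is not any single estimate but the passage between data that lives only on the dense orbit $\Gamma/H$, where the combinatorial objects $\rho^n$ are defined, and data that must be continuous on all of $\G$, whose unit space also contains points corresponding to subgroups in the orbit closure but not the orbit of $H$. This is precisely what the established correspondence between \emph{local} and \emph{locally constant} functions is for: it guarantees that the orbit-wise formulae extend uniquely to $\C_c(\G)$, and that the uniform bounds, holding on a dense set of units and being continuous in the unit, hold everywhere. A secondary point requiring care is matching the single-sequence formulation of property $A$ with the net formulation of amenability; this is handled by the explicit exhaustion $C_n$ of $\G$ by the compact, clopen sets of bounded-length arrows, using that $\G$ is $\sigma$-compact with totally disconnected unit space.
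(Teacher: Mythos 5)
Your forward direction is sound and essentially parallels the paper's: the paper takes $f_n(x,\gamma)=\rho^n_{x_{S_n}}(\gamma.x_{S_n})$ and verifies Renault's positive-type condition $f_n\ast f_n^\ast\to 1$ uniformly on compacta, whereas you take the squared values $|\rho^n_p(\gamma p)|^2$ and verify the $\ell^1$ (probability-measure) form of approximate invariance; the well-definedness, factorisation through $F_{R_n}$, vanishing near $\centralstar$, and the extension of the uniform bounds from the dense orbit to all of $\G$ are all handled correctly and match the paper's mechanism.

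The converse, however, has a genuine gap at the step \textquotedblleft local constancy of $\xi_{i(n)}$ gives locality of $\rho^n$\textquotedblright. The characterisations of topological amenability you invoke (whether the $\ell^1$ mean version or the $\ell^2$ version obtained via $\xi_i=\sqrt{f_i}$) produce witnesses that are merely \emph{continuous} compactly supported functions; nothing makes them locally constant, and $\sqrt{f_i}$ inherits no more regularity than $f_i$ has. But Elek's locality is an \emph{exact} condition: $\rho^n_y\circ\theta=\rho^n_x$ for every root-label isomorphism $\theta$ of $R_n$-balls, which under your dictionary means the corresponding function on $\G$ must genuinely factor through some finite stage $F_{S_n}$, i.e.\ be locally constant. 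A continuous $\xi_{i(n)}$ restricted to the orbit yields only \emph{approximately} local functions, and approximate locality is not part of the definition of local property $A$. Closing this hole is exactly where the paper spends most of its effort in the converse: it approximates each $f_n$ in sup-norm by locally constant functions $f_{n,\eps}$ vanishing on the largest possible ball around $\centralstar$, and then proves, using that each fibre sum has at most $(|Q|+1)^{T_n}$ nonzero terms, that for a suitably chosen sequence $\eps_n\searrow 0$ the functions $f_{n,\eps_n}$ still satisfy both convergence conditions. Unless you supply this approximation argument (or a citable locally constant refinement of the amenability characterisation for ample groupoids), your construction does not produce local functions. A secondary, fixable inaccuracy: you cannot simultaneously assume exact fibrewise normalisation $\|\rho^n_p\|_2=1$ and compact support; the compactly supported characterisation only gives $\sum_{g\in\G^u}|f_n(g)|^2\to 1$ uniformly, which is why the paper ends with the renormalisation $\hat\rho^n_x=\rho^n_x/\|\rho^n_x\|_2$ and a short estimate showing the normalised family still works.
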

\begin{proof}
  We show that local property $A$ of the Schreier graph $\Schreier(H)$
  implies topological amenability of $\G$.
  Let $\G$ be constructed from $H\in Z$ 
  and let $\rho^n\colon \Gamma/H \rightarrow l^2(\Gamma/H)$ 
  implement local property $A$ of $S_\Gamma^Q(H)$.
  Let $S_n$ describe the locality of $\rho^n$ 
  as in Section \ref{sec:introduction}.
  Let $x_{S_n}$ denote any element of the equivalence class of $E_{S_n}$
  forming the $S_n$-component of $x$
  and define $f_n\in \C_c(\G)$ by $f_n(x, \gamma) = \rho^n_{x_{S_n}}(\gamma.x_{S_n})$.
  This is independent of the choice of $x_{S_n}$,
  as $\rho^n$ are locally defined and of width at most $S_n$, 
  continuous as it is constant on $2^{-S_n}$-balls and compactly supported,
  as it vanishes on the $2^{-S_n}$-ball of $\centralstar$.
  For any unit $u\in \G^0$ we have
  \begin{align*}
    \sum\limits_{(u, \gamma) \in \G^u} |f_n(u,\gamma)|^2
    &= \sum\limits_{y \in B_{S_n}(u_{S_n})} |\rho^n_{u_{S_n}}(y)|^2
    = \|\rho^n_{u_{S_n}}\|_2^2 = 1,
    \intertext{
      using that $\rho^n_{x}$ is supported in the $S_n$-ball centred at $u_{x}$.
      Furthermore, we calculate
    }
    f_n \ast f_n^\ast(x,\gamma)
    &= \sum\limits_{(\gamma.x, \gamma') \in \G^{\gamma.x}}
    f_n((x,\gamma)(\gamma.x, \gamma')) \overline{f_n}(\gamma.x, \gamma'),
    \intertext{ 
      where we may restrict to $l(\gamma')\leq S_n$, 
      as $f_n(\gamma.x, \gamma')$ vanishes otherwise,
    }
    &= \sum\limits_{(\gamma.x, \gamma') \in \G^{\gamma.x}, l(\gamma')\leq S_n}
    \rho^n_{x_{S_n + l(\gamma)}}(\gamma'\gamma x_{S_n + l(\gamma)}) 
    \overline{\rho^n_{\gamma x_{S_n + l(\gamma)}}}(\gamma'\gamma x_{S_n + l(\gamma)})\\
    &= \sum\limits_{z \in B_{S_n}(\gamma x_{S_n + l(\gamma)})}
    \rho^n_{x_{S_n + l(\gamma)}}(z) 
    \overline{\rho^n_{\gamma x_{S_n + l(\gamma)}}}(z)
    = \langle \rho^n_{x_{S_n + l(\gamma)}}, \rho^n_{\gamma x_{S_n + l(\gamma)}}\rangle,
  \end{align*}
  using again the assumptions on the support 
  of $\rho^n_{\gamma x_{S_n + l(\gamma)}}$.
  However, for fixed $u,v\in \Gamma/H$ we have
  \[
    \left| 1- \langle \rho^n_u, \rho^n_v\rangle\right| 
    = \left| \langle \rho^n_u - \rho^n_v, \rho^n_v\rangle \right|
    \leq \|\rho^n_u - \rho^n_v\|_2 \cdot \|\rho^n_v\|_2 
    \overset{(\dag)}\leq 1/n \cdot 1 
    \xrightarrow{n\to\infty} 0,
  \]
  where the estimate $(\dag)$ holds for large $n$ such that $d(u,v)\leq n$.
  But as $l(\gamma)$ is bounded on compact sets, 
  the same estimate may be used uniformly on any compact set for sufficiently large $n$,
  so that $f_n\ast f_n^\ast$ converges to one uniformly on compact subsets.
  Those functions witness the (topological) amenability of $\G$
  as in the original definition \cite[page 92]{Renault_GroupoidApproach}.

  Conversely, suppose that $\G$ is topologically amenable.
  By an equivalent characterisation of amenability \cite[Prop 2.2.13]{AmenableGrpd}, 
  we may assume that there is a sequence $f_n\in \C_c(\G)$, 
  such that
  \begin{align*}
    \sum\limits_{(u,\gamma)\in \G^u} |f_n(u,\gamma)|^2 &\xrightarrow{n\to\infty} 1 
    \text{ uniformly on compact subsets of $\G^0$ as a function of $u$}\\
    \text{and} 
    \sum\limits_{h\in \G^{r(g)}} |f_n(g^{-1}h) - f_n(h)|^2 &\xrightarrow{n\to\infty} 0 
    \text{ uniformly on compact subsets of $\G$ as a function of $g$.}
  \end{align*}
  To obtain local maps $\rho^n_x$ 
  as in the definition of local property A,
  we first, for any $\eps>0$,
  approximate $f_n$ by (uniformly) locally constant functions $f_{n,\eps}\in \C_c(\G)$
  such that $\|f_n - f_{n,\eps}\|_\infty < \eps$, 
  choosing $f_{n,\eps}$ to be zero on the largest possible ball 
  centred at $\centralstar$.

  We next construct a sequence $\eps_n \searrow 0$, 
  such that $f_{n,\eps_n}$ satisfies the convergence properties above:
  As for fixed $n$ there is $T_n\in N$ 
  such that both $f_n$ and $f_{n,\eps}$
  for every $\eps>0$ 
  vanish on the $2^{-T_n}$-ball of $\centralstar$ 
  we may restrict summation to arrows
  described by group elements $\gamma$
  with length $l(\gamma)$ at most $T_n$,
  such that the respective sums become finite
  with less than $(|Q| + 1)^{T_n}$ terms.
  %\begin{align*}
  %  \sum\limits_{(u,\gamma)\in \G^u} |f_n(u,\gamma)|^2 
  %  = \sum\limits_{\substack{(u,\gamma)\in \G^u,\\ l(\gamma)<T_n}} |f_n(u,\gamma)|^2
  %  \quad\text{and}\quad
  %  \sum\limits_{(u,\gamma)\in \G^u} |f_{n,\eps}(u,\gamma)|^2 
  %  = \sum\limits_{(u,\gamma)\in \G^u, l(\gamma)<T_n} |f_{n,\eps}(u,\gamma)|^2.
  %\end{align*}
  Therefore,
  \begin{align*}
    \left|\sum\limits_{(u,\gamma)\in \G^u} 
      |f_n(u,\gamma)|^2 - \sum\limits_{(u,\gamma)\in \G^u} 
      |f_{n,\eps}(u,\gamma)|^2\right|
      &\leq \sum\limits_{\substack{(u,\gamma)\in \G^u,\\l(\gamma)\leq T_n}}
      (|f_n(u,\gamma)| + \eps)^2 - |f_{n}(u,\gamma)|^2\\
    &\leq (|Q|+1)^{T_n}(2\|f_n\|_\infty + \eps)\eps 
    \xrightarrow{\eps\to 0} 0,
  \intertext{
  with the convergence uniformly in $u \in \G^0$.
  Likewise, we calculate
  }
    \sum\limits_{h\in \G^{r(g)}} |f_{n,\eps}(g^{-1}h) - f_{n,\eps}(h)|^2
    &\leq \sum\limits_{h\in \G^{r(g)}, l(\gamma)\leq T_n} (|f_{n}(g^{-1}h) - f_{n}(h)|+2\eps)^2\\
    &\leq (4\eps^2 + 8\eps \|f_n\|_\infty)(|Q|+1)^{T_n} 
    + \sum\limits_{h\in \G^{r(g)}} |f_{n}(g^{-1}h) - f_{n}(h)|^2 
    \\
    &\xrightarrow{\eps \to 0}
    \sum\limits_{h\in \G^{r(g)}} |f_{n}(g^{-1}h) - f_{n}(h)|^2,
  \intertext{
  with the convergence uniformly in $g\in \G$.
  Picking $\eps_n\to 0$ such that 
  }
  1/n &\geq
  (|Q|+1)^{T_n}(2\|f_n\|_\infty + \eps_n)\eps_n 
  \\
  \text{and}\qquad
  1/n &\geq
  (4\eps_n^2 + 8\eps_n \|f_n\|_\infty)(|Q|+1)^{T_n} 
  \end{align*}
  does the trick.
  To such $f_{n,\eps_n}$ 
  we may assign $\rho^{n} \colon \Gamma/H \rightarrow l^2(\Gamma/H)$ 
  by $\rho^{n}_x(\gamma x) = f_{n,\eps_n}([[x]], \gamma)$.

  Picking $S_{n}$ such that $f_{n,\eps_n}$ is  constant on any $2^{-S_{n}}$-ball,
  we see that $\rho^{n}_x$ is supported on $B_{S_{n}}(x)$, 
  since $f_{n,\eps_n}$ vanishes on the $2^{-S_{n}}$-neighbourhood 
  of $\centralstar$.
  Similarly,
  $\rho^{n}_x(\gamma x) = \rho^{n}_y(\gamma y)$ 
  for $l(\gamma)\leq S_{n}$ if the $S_{n}$-balls around $x$ and $y$ are isomorphic, 
  as $([[x]],\gamma)$ and $([[y]], \gamma)$ are $2^{-S_{n}}$-close in that case
  and we conclude that $\rho_n$ is local
  of width $S_n$.

  Furthermore, we compute that
  \begin{align*}
    \|\rho^{n}_{\gamma H}\|_2^2 
    &= \sum\limits_{\gamma'H\in \G/H} |\rho^{n}_{\gamma H}(\gamma'H)|^2
    = \sum\limits_{([[\gamma H]],\gamma')\in \G^u} |f_{n,\eps_n}([[\gamma H]],\gamma')|^2 
    \xrightarrow{n\to\infty} 1
    \intertext{converges uniformly on $\G^0$. Likewise,}
    \|\rho^{n}_{\gamma H} - \rho^{n}_{\gamma'H}\|_2^2
    &= \sum \limits_{\eta H \in \Gamma/H} |\rho^{n}_{\gamma H}(\eta H) - \rho^{n}_{\gamma'H}(\eta H)|^2\\
    &= \sum\limits_{\eta H \in \Gamma/H} | f_{n,\eps_n}(\gamma[[H]], \eta\gamma^{-1}) - f_{n,\eps_n}(\gamma'[[H]], \eta(\gamma')^{-1})|^2\\
    &= \sum\limits_{\eta H \in \Gamma/H} | f_{n,\eps_n}\left((\gamma[[H]], \gamma'\gamma^{-1})(\gamma'[[H]], \eta(\gamma')^{-1})\right) - f_{n,\eps_n}(\gamma'[[H]], \eta(\gamma')^{-1})|^2\\
    &= \sum\limits_{\eta H \in \Gamma/H} | f_{n,\eps_n}\left((\gamma'[[H]], \gamma(\gamma')^{-1})^{-1}(\gamma'[[H]], \eta(\gamma')^{-1})\right) - f_{n,\eps_n}(\gamma'[[H]], \eta(\gamma')^{-1})|^2\\
    &= \sum\limits_{h \in \G^{\gamma'[[H]]}} | f_{n,\eps_n}\left((\gamma[[H]], \gamma'\gamma^{-1})h\right) - f_{n,\eps_n}(h)|^2
    \xrightarrow {n\to\infty} 0
  \end{align*}
  converges uniformly for $(\gamma'[[H]], \gamma(\gamma')^{-1})$ in compact subsets of $\G$. 
  In particular, the convergence is uniform, 
  when the pair $(\gamma, \gamma')$ is taken from a subset with bounded difference, 
  that is, if there is $N>0$ such that
  $l(\gamma(\gamma')^{-1}) < N$.
  More importantly, 
  this means that
  for any $N\in \N$ 
  we may choose $n_0\in \N$, 
  such that $\|\rho^n_{\gamma H} - \rho^n_{\gamma'H}\|_2^2\leq 1/N$, 
  whenever $d_{\Schreier(H)}(\gamma H, \gamma'H) \leq N$ 
  and $n\geq n_0$.
  The analogous statement holds after replacing $\rho^n_x$
  with the normed function $\hat\rho^n_x = \rho^n_x/\|\rho^n_x\|_2$, since
  \begin{align*}
    \|\hat\rho^n_{\gamma H} - \hat \rho^n_{\gamma' H}\|_2
    &\leq \frac{1}{\|\rho^n_{\gamma H}\|_2}
    \left( 
      \|\rho^n_{\gamma H} - \rho^n_{\gamma'H}\|_2 
      + \left| \|\rho^n_{\gamma H}\|_2 - \|\rho^n_{\gamma' H}\|_2\right|
    \right),
  \end{align*}
  while $\|\rho^n_{\gamma H}\|_2$ 
  and $ \left| \|\rho^n_{\gamma H}\|_2 - \|\rho^n_{\gamma' H}\|_2\right|$
  converge uniformly to $1$ and $0$, respectively.
  Then, after relabelling, 
  $\hat\rho^n$ witnesses local property A of $\Schreier(H)$.
\end{proof}

For \etale{} groupoids there is a clear relation
between amenability and nuclearity of their reduced \Csalgebra{}s,
which directly translates to our case.
Compare for example \cite[Section 2]{Sims_AmenableFell} 
for a brief overview of the different notions of groupoid amenability.

\begin{corollary}
  \label{cor:locPropANuclear}
  Let $Z$ be a uniformly recurrent subgroup and $H\in Z$.
  The graph $\Schreier(H)$ has local property A
  if and only if
  the \Csalgebra{} $\Csr(Z)$ is nuclear.
\end{corollary}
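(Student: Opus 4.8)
The plan is to deduce Corollary \ref{cor:locPropANuclear} from Theorem \ref{thm:locPropANuclear} by invoking the standard dictionary between topological amenability of an \etale{} groupoid and nuclearity of its reduced \Csalgebra{}. By Theorem \ref{thm:locPropANuclear}, local property A of $\Schreier(H)$ is equivalent to topological amenability of $\G$. Since we have already identified $\Csr(Z)$ with $\Csr(\G)$, it suffices to argue that topological amenability of $\G$ is equivalent to nuclearity of $\Csr(\G)$.

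First I would recall that $\G$ is an \emph{ample minimal Hausdorff \etale{} groupoid} with a totally disconnected (in fact second-countable, being a projective limit of finite discrete sets) unit space, as established in Section \ref{sec:identification}. Under these hypotheses the relevant results apply cleanly. The forward direction---topological amenability implies nuclearity---is classical: for a (second-countable, locally compact, Hausdorff) \etale{} groupoid, topological amenability implies that the full and reduced \Csalgebra{}s coincide and are nuclear; see for instance the overview in \cite[Section 2]{Sims_AmenableFell} together with Renault's original work \cite{Renault_GroupoidApproach}.

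For the converse, I would appeal to the fact that for \emph{second-countable \etale{} groupoids with totally disconnected unit space} the chain of implications actually closes into an equivalence: nuclearity of $\Csr(\G)$ implies that $\G$ is (measurewise, hence by the results surveyed in \cite{Sims_AmenableFell}, topologically) amenable. This is where one must be slightly careful, since in full generality nuclearity of the reduced algebra need not force topological amenability; the point is that our $\G$ is ample and Hausdorff with second-countable unit space, which is precisely the setting in which the equivalence of topological amenability, measurewise amenability, and nuclearity of $\Csr(\G)$ is known to hold. I would therefore cite the appropriate equivalence from \cite{Sims_AmenableFell} for this class of groupoids.

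The main obstacle, and the only genuinely non-routine point, is ensuring that the converse implication (nuclearity $\Rightarrow$ amenability) is legitimately available for $\G$ rather than merely the easy forward direction. The resolution is to verify that $\G$ satisfies the standing hypotheses---Hausdorff, \etale{}, second-countable, ample---under which this implication is a theorem, all of which follow from the construction in Section \ref{sec:identification}. Once that is in place, the corollary is immediate: $\Schreier(H)$ has local property A $\iff$ $\G$ is topologically amenable $\iff$ $\Csr(\G) \cong \Csr(Z)$ is nuclear.
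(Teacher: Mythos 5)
Your proposal is correct and follows essentially the same route as the paper: the paper likewise combines Theorem \ref{thm:locPropANuclear} with the identification $\Csr(Z)\cong\Csr(\G)$ and the known equivalence, for \etale{} groupoids, between topological amenability and nuclearity of the reduced \Csalgebra{} (citing \cite[Corollary 6.2.14]{AmenableGrpd}). Your extra care about the converse direction (nuclearity $\Rightarrow$ amenability), verifying that $\G$ is Hausdorff, \etale{}, ample and second countable, is a sound precaution but is subsumed in that single citation, which covers exactly this class of groupoids.
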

This reproduces  and extends \cite[Theorem 8]{Elek_URS}.
\begin{proof}
  As the groupoid $\G$ associated with $Z$ is \etale, 
  $\Csr(\G)$ is nuclear if and only if 
  $\G$ is (topologically) amenable by \cite[Corollary 6.2.14]{AmenableGrpd}.
  By Theorem \ref{thm:locPropANuclear}
  this is exactly the case if the Schreier graph $\Schreier(H)$
  has local property $A$.
\end{proof}

\begin{remark}
  A minimal \emph{amenable} second countable \etale{} groupoid 
  has simple reduced \Csalgebra, 
  if and only if it is topologically principal.
  Hence, if the Schreier graphs of $Z$ have local property $A$, 
  then $\Csr(Z)$ is simple if and only if $Z$ is generic.

  More generally, 
  $\Csr(Z)$ is simple 
  for arbitrary URS $Z$,
  if and only if $\Csr(\G)$ has the \emph{intersection property} 
  relative to $\C_0(\G^0)$,
  meaning that every (nontrivial) ideal of $\Csr(\G)$
  intersects $\C_0(\G^0)$ nontrivially.
\end{remark}

\bibliographystyle{amsplain}
\bibliography{meta/nonmathsci.bib,meta/mathsci.bib}
\vspace{1cm}
\begin{minipage}[l]{\textwidth}
\noindent Clemens Borys\\
Department of Mathematical Sciences\\
University of Copenhagen\\ 
Universitetsparken 5, DK-2100, Copenhagen\\
Denmark \\
borys@math.ku.dk
\end{minipage}

\end{document}